\renewcommand{\bar}{\overline}
\newcommand{\AAA}{\mathbb{A}}
\newcommand{\FF}{\mathbb{F}}
\newcommand{\PP}{\mathbb{P}}
\newcommand{\QQ}{\mathbb{Q}}
\newcommand{\RR}{\mathbb{R}}
\newcommand{\ZZ}{\mathbb{Z}}
\newcommand{\Qbar}{\bar{\QQ}}
\newcommand{\Fp}{\FF_p}
\newcommand{\calM}{{\mathcal M}}
\newcommand{\calO}{{\mathcal O}}
\newcommand{\calP}{{\mathcal P}}
\DeclareMathOperator{\PGL}{PGL}
\DeclareMathOperator{\Preper}{Preper}
\DeclareMathOperator{\Res}{Res}
\newcommand{\dsps}{\displaystyle}
\theoremstyle{plain}
\newtheorem{thm}{Theorem}[section]
\newtheorem{lemma}[thm]{Lemma}
\newtheorem{conj}{Conjecture}
\newtheorem*{thmblank}{Theorem}
\theoremstyle{definition}
\newtheorem{defin}[thm]{Definition}
\newtheorem{alg}[thm]{Algorithm}
\theoremstyle{remark}
\newtheorem{remark}[thm]{Remark}
\numberwithin{equation}{section}
\title[Dynamics of quadratic maps]
{Small dynamical heights for quadratic polynomials
and rational functions}
\date{November 30, 2013; revised June 12, 2014}
\subjclass[2010]{Primary: 37P35 Secondary: 37P30, 11G50}
\keywords{canonical height, arithmetic dynamics, preperiodic points}
\author[Benedetto]{Robert~L. Benedetto}
\address[Benedetto, Chen, White]
	{Amherst College \\ Amherst, MA 01002}
\email[Benedetto]{rlb@math.amherst.edu}
\author[Chen]{Ruqian Chen}
\email[Chen]{rchen13@amherst.edu}
\author[Hyde]{Trevor Hyde}
\email[Hyde]{tghyde@umich.edu}
\author[Kovacheva]{Yordanka Kovacheva}
\email[Kovacheva]{ykovach@math.uchicago.edu}
\author[White]{Colin White}
\email[White]{crwhite14@amherst.edu}
\address[Hyde]{University of Michigan\\ Ann Arbor, MI 48109}
\address[Kovacheva]{University of Chicago\\ Chicago, IL 60637}
\begin{document}

\newcounter{bean}
\newcounter{sheep}

\begin{abstract}
Let $\phi\in \QQ(z)$ be a polynomial or rational function of degree $2$.
A special case of Morton and Silverman's
Dynamical Uniform Boundedness Conjecture states that
the number of rational preperiodic points of $\phi$
is bounded above by an absolute constant.
A related conjecture of Silverman states that the canonical height
$\hat{h}_{\phi}(x)$ of a non-preperiodic rational point $x$ is
bounded below by a uniform multiple of the height of $\phi$ itself.
We provide support for these conjectures by computing the set
of preperiodic and small height rational points
for a set of degree~2
maps far beyond the range of previous searches.
\end{abstract}

\maketitle

In this paper, we consider the dynamics of a rational function
$\phi(z)\in\QQ(z)$ acting on $\PP^1(\QQ)$.
The degree of $\phi=f/g$ is $\deg\phi:=\max\{\deg f, \deg g\}$,
where $f,g\in\QQ[z]$ have no common factors.
Define $\phi^0(z)=z$, and for
every $n\geq 1$, let $\phi^n(z)=\phi\circ\phi^{n-1}(z)$; that is,
$\phi^n$ is the $n$-th iterate of $\phi$ under composition.
In this context,
the automorphism group $\PGL(2,\QQ)$ of $\PP^1(\QQ)$
acts on $\QQ(z)$ by conjugation.

The \emph{forward orbit} of a point $x\in\PP^1(\QQ)$
is the set of iterates
$$\calO(x) = \calO_{\phi}(x):=\{\phi^n(x) : n\geq 0\}.$$
The point $x$ is said to be {\em periodic}
under $\phi$ if there is an integer $n\geq 1$ such that
$\phi^n(x)=x$.  In that case, we say $x$ is $n$-periodic,
we call the orbit $\calO(x)$ an $n$-cycle,
and we call $n$ the \emph{period} of $x$, or of the cycle.
The smallest period $n\geq 1$ of a periodic point $x$
is called the \emph{minimal} period of $x$, or of the cycle.
More generally, $x$ is {\em preperiodic} under
$\phi$ if there are integers $n>m\geq 0$ such that
$\phi^n(x)=\phi^m(x)$.  Equivalently, $\phi^m(x)$ is
periodic for some $m\geq 0$;
also equivalently, the forward orbit $\calO(x)$ is finite.
We denote the set of preperiodic points of $\phi$ in
$\PP^1(\QQ)$ by $\Preper(\phi,\QQ)$.

In 1950, using the theory of arithmetic heights,
Northcott \cite{Nor} proved that if $\deg\phi\geq 2$, then
$\phi$ has only finitely many preperiodic points in $\PP^1(\QQ)$.
(In fact, he proved a far more general finiteness result,
for morphisms of $\PP^N$ over any number field.)
In 1994,
Morton and Silverman proposed
a dynamical Uniform Boundedness Conjecture
\cite{MS1,MS2}; for $\phi\in\QQ(z)$ acting on $\PP^1(\QQ)$, it
says the following.

\begin{conj}[Morton-Silverman, 1994]
\label{conj:ubc}
For any $d\geq 2$, there is a constant $M=M(d)$ such that
for any $\phi\in\QQ(z)$ of degree $d$,
$$\#\Preper(\phi,\QQ)\leq M.$$
\end{conj}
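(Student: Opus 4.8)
The plan is to reduce Conjecture~\ref{conj:ubc} to a Diophantine statement about rational points on a tower of algebraic varieties, and then to invoke a (conjectural) uniform bound for such points. First I would fix $d\ge 2$ and, for each $n\ge 1$, form the \emph{dynatomic} moduli variety $\calY_n=\calY_n(d)$ parametrizing pairs $(\phi,x)$ in which $\phi$ is a degree-$d$ map and $x$ is a point of exact period $n$ under $\phi$, all taken modulo the conjugation action of $\PGL_2$. Because a preperiodic point breaks up into a finite ``tail'' landing on a cycle, a routine combinatorial argument reduces the conjecture to two uniform bounds: one on the minimal periods $n$ of $\QQ$-rational periodic points of degree-$d$ maps, and one on the length of the preperiodic tail. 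Both reduce in turn to the assertion that, for $n$ larger than some $N(d)$, the variety $\calY_n$ (respectively its ``tail'' analogue) has \emph{no} $\QQ$-rational points whatsoever.

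Second, I would analyze the geometry of $\calY_n$ as $n\to\infty$. Here the model case is the quadratic polynomial family $\phi_c(z)=z^2+c$, in which $\calY_n$ is literally a curve --- classically the dynamical modular curve $Y_1(n)$ --- whose genus one can compute and show tends to infinity; one hopes more generally that $\calY_n(d)$ becomes a variety of general type once $n>N(d)$, with geometric invariants growing. By Faltings's theorem (in the curve case) each $\calY_n(\QQ)$ is then already finite, and one would want to show that for $n>N(d)$ any rational point must lie on a proper closed ``degenerate'' locus (where the marked cycle collapses or $\phi$ drops degree) that can be ruled out directly. The residue is a finite list of admissible periods $n\le N(d)$ and tail lengths, each contributing a bounded number of preperiodic points, and summing these contributions produces the constant $M(d)$. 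Note that Northcott's theorem already supplies the \emph{non-uniform} statement --- finiteness of $\Preper(\phi,\QQ)$ for each single $\phi$ --- so the whole content of the conjecture lies in making the bound independent of $\phi$.

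The hard part --- and the reason this remains a conjecture --- is precisely that passage from ``finite for each $n$'' to ``uniform in $n$ and in $\phi$''. Faltings's theorem is ineffective and, worse for this purpose, furnishes no bound on the \emph{number} of rational points that is uniform over the curves $\calY_n$; what is really needed is a Caporaso--Harris--Mazur-type uniform boundedness statement applied across the entire tower $\{\calY_n\}_{n\ge 1}$, and that is known only as a consequence of the Bombieri--Lang conjecture. Even the first genuinely hard instances are out of reach: for $z^2+c$ one must exclude $\QQ$-rational points of \emph{every} period $n\ge 4$, which is known only for $n\in\{4,5\}$ unconditionally and for $n=6$ under the Birch--Swinnerton-Dyer conjecture, with all larger $n$ open. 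Absent such Diophantine input, the only presently available course --- and the one the rest of this paper pursues --- is to gather computational evidence for the predicted bound across large families of degree-$2$ maps, at the same time testing Silverman's companion lower bound for the canonical height.
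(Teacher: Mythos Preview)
The statement you were asked to prove is a \emph{conjecture}, not a theorem: the paper does not prove Conjecture~\ref{conj:ubc}, and indeed states explicitly that ``only partial results towards Conjecture~\ref{conj:ubc} have been proven.'' There is therefore no proof in the paper against which to compare your proposal.

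To your credit, you recognize this yourself. Your write-up is not a proof but an informed discussion of why the conjecture is open: you correctly identify the reduction to dynatomic varieties $\calY_n$, the role of Faltings's theorem in the curve case, and the fact that what is missing is a uniform bound of Caporaso--Harris--Mazur type, which in turn rests on Bombieri--Lang. You also correctly summarize the known partial results for $z^2+c$ (periods $4$, $5$ unconditional; period $6$ conditional on BSD). Your final paragraph even anticipates the paper's actual contribution, which is computational evidence rather than a proof. So your proposal is accurate as commentary, but it is not --- and does not claim to be --- a proof of the conjecture.
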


Only partial results towards
Conjecture~\ref{conj:ubc} have been proven, including
non-uniform bounds of various strengths, as well as conditions
under which certain preperiodic orbit structures are
possible or impossible.
See, for example
\cite{Ben9,CaGol,FPS,Man,Mor1,Mor2,MS1,MS2,Nar,Pez1,Poo,Zieve},
as well as \cite[Section~4.2]{Sil}.

Poonen \cite{Poo} later stated a sharper version of
the conjecture for the special case of quadratic polymials over $\QQ$:

\begin{conj}[Poonen, 1998]
\label{conj:poo}
Let $\phi\in\QQ[z]$ be a polynomial of degree $2$.
Then $\#\Preper(\phi,\QQ)\leq 9$.
\end{conj}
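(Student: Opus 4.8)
The plan is to reduce to the one-parameter family $\phi_c(z)=z^2+c$ and classify all possible preperiodic graphs. Every $\phi\in\QQ[z]$ of degree $2$ is conjugate by an affine element of $\PGL(2,\QQ)$ to $\phi_c$ for some $c\in\QQ$, and such a conjugation carries $\PP^1(\QQ)$ bijectively to itself and commutes with iteration, so $\#\Preper(\phi,\QQ)=\#\Preper(\phi_c,\QQ)$ and it suffices to bound the latter. Since $\infty$ is a totally ramified fixed point of $\phi_c$ whose only preimage is itself, $\infty$ contributes exactly one isolated vertex to the rational preperiodic graph $G_c$ of $\phi_c$, and one is reduced to showing that $\phi_c$ has at most $8$ finite rational preperiodic points.

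The crux, which I expect to be the real obstacle, is to bound the periods of rational cycles. A rational point of exact period $n$ is a rational root of the $n$-th dynatomic polynomial of $\phi_c$: for $n=1$ this polynomial is $z^2-z+c$, giving at most two rational fixed points, and for $n=2$ it is $z^2+z+(c+1)$, so there is at most one rational $2$-cycle. For $n\ge 3$ the plane curve $\{\Phi_n^{*}(z,c)=0\}$ has genus tending to infinity with $n$; Morton handled $n=3$ and Flynn--Poonen--Schaefer handled $n=4$ (a genus-$2$ curve), but already $n=5$ is a curve of genus $14$ known to carry no relevant rational points only under the Birch--Swinnerton-Dyer conjecture (Stoll), while $n\ge 6$ is open in general and no uniform argument is available. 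Granting that no $\phi_c$ with $c\in\QQ$ has a rational cycle of period $\ge 4$, every rational cycle of every $\phi_c$ has period $1$ or $2$.

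With the cycle lengths pinned down, the rest is a combinatorial-arithmetic classification. For finite $w\in\QQ$ the $\phi_c$-preimages of $w$ are $\pm\sqrt{w-c}$, so each finite vertex of $G_c$ has in-degree $0$ or $2$, the only exception being $w=c$, whose unique preimage is the critical point $0$; hence $G_c\setminus\{\infty\}$ is symmetric under $z\mapsto -z$ and its non-periodic part is a union of finite binary trees hanging off the cycles. A rational fixed point $\alpha\ne 0$ automatically satisfies $\alpha-c=\alpha^2$, so $-\alpha$ is always preperiodic, and likewise a rational $2$-cycle forces in the negatives of its two points. Each additional generation of preimages requires one more explicit polynomial in the parameters to be a perfect square; after eliminating $c$ using the periodic data, these conditions cut out a curve --- a conic at depth one, and curves of higher genus further down --- so the trees must be shallow. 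Working through the finitely many portrait shapes that survive and, for each, enumerating the rational points on the associated parameter curve (directly for the conics and elliptic curves, and by Faltings plus an explicit search for the curves of genus $\ge 2$) should show that no $\phi_c$ has more than $8$ finite rational preperiodic points.

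Finally the bound $9$ is sharp: $\phi_{-21/16}(z)=z^2-\tfrac{21}{16}$ has exactly the nine rational preperiodic points $\infty$, the fixed points $\tfrac74$ and $-\tfrac34$, the $2$-cycle $\{\tfrac14,-\tfrac54\}$, and the strictly preperiodic points $-\tfrac74,\ \tfrac34,\ \tfrac54,\ -\tfrac14$. Thus the classification above, together with the (still conjectural) input on cycle lengths, yields $\#\Preper(\phi,\QQ)\le 9$.
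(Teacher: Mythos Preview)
The statement you are attempting to prove is labeled \emph{Conjecture}~\ref{conj:poo} in the paper, and the paper offers no proof of it; indeed, the authors write that ``a proof of Conjecture~\ref{conj:poo} seems to be very far off at this time.''  So there is no proof in the paper against which to compare your attempt, and the right observation is that your proposal is not a proof either: you explicitly \emph{grant} the assertion that no $\phi_c$ has a rational cycle of large period, which is exactly the open part of the problem.  Your outline is essentially Poonen's own strategy from~\cite{Poo}: reduce to $z^2+c$, bound the possible cycle lengths, and then classify the finitely many preperiodic portraits hanging off the allowed cycles.  Poonen carried out the classification step completely, so Conjecture~\ref{conj:poo} is known to be equivalent to the statement that $\phi_c$ has no $\QQ$-rational $n$-cycle for $n\geq 6$.

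Your sketch also contains factual slips that would derail the argument even at the level of a conditional proof.  Rational $3$-cycles \emph{do} exist: the dynatomic curve for $n=3$ has genus~$0$, and for instance $c=-29/16$ admits the $3$-cycle $-\tfrac14\mapsto -\tfrac74\mapsto \tfrac54\mapsto -\tfrac14$ (this map realizes the bound of $9$, as noted in the paper).  Consequently your reduction to ``period $1$ or $2$'' is wrong; Poonen's classification must, and does, treat the period-$3$ case.  Relatedly, your attributions are shifted by one: Morton~\cite{Mor2} ruled out $n=4$, Flynn--Poonen--Schaefer~\cite{FPS} ruled out $n=5$ (their genus-$2$ curve is a quotient of the period-$5$ dynatomic curve), and Stoll's conditional result concerns $n=6$, not $n=5$.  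With these corrections your outline matches the known state of affairs, but it remains a conditional argument, not a proof.
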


If true, Conjecture~\ref{conj:poo} is sharp; for example,
$z^2-29/16$ and $z^2 - 21/16$ each have exactly $9$ rational
preperiodic points, including the point at $\infty$.
However, even though it is the simplest case of Conjecture~\ref{conj:ubc},
a proof of Conjecture~\ref{conj:poo} seems to be very far off
at this time.

As little as we know about the uniform boundedness conjecture
for quadratic polynomials, we know even less about rational functions.
In her Ph.D.\ thesis, Manes \cite{Man} made the first systematic attack
on preperiodic points of quadratic rational functions,
including a conjecture that 
$\#\Preper(\phi,\QQ)\leq 12$ when $\phi(z)\in\QQ(z)$
has $\deg\phi=2$.  In this paper, we give examples with
$14$ rational preperiodic points, showing
that  Manes' conjecture is false.
On the one hand, we found a single map with a rational
$7$-cycle, along with the immediate preimages of all seven points;
see equation~\eqref{eq:7per}.
On the other hand, we found many maps with a rational point $x$
whose sixth iterate $\phi^6(x)$ is $2$-periodic;
the immediate preimages of all those preperiodic points again
give a total of 14 points.
We also found a single map with a rational point $x$
for which $\phi^5(x)$ is $3$-periodic, again giving a total
of 14 points.
See Table~\ref{tab:quadratlen8} for examples.
It would appear that there are only finitely many maps with a
7-cycle or with a 3-periodic cycle with a tail of length 5,
while there seem to be infinitely many with a 2-periodic cycle
having a tail of length 6; for the moment, however,
those finiteness questions remain open.
Meanwhile, Blanc, Canci, and Elkies, in their study of
the space of degree two
rational functions with a rational point of period $6$,
have recently announced an infinite family of quadratic
maps with $14$ $\QQ$-rational preperiodic points;
see \cite[Lemma 4.7]{BC}.
Their family uses two separate orbits: a $6$-cycle and a fixed point,
together with the preimages of all seven periodic points.

Besides its preperiodic orbits,
any rational function $\phi\in\QQ(z)$ of degree $d\geq 2$
has an associated \emph{canonical height}.
The canonical height is a function
$\hat{h}_{\phi}:\PP^1(\Qbar)\rightarrow [0,\infty)$
satisfying the functional
equation $\hat{h}_{\phi}(\phi(z)) = d\cdot \hat{h}_{\phi}(z)$,
and it has the property that
$\hat{h}_\phi(x)=0$ if and only if
$x$ is a preperiodic point of $\phi$;
see Section~\ref{sect:background}.
For a non-preperiodic point $y$, on the other hand,
$\hat{h}_{\phi}(y)$ measures how fast the standard Weil height
$h(\phi^n(y))$ of the iterates of $y$ increases with $n$.
By an analogy with Lang's height lower bound
conjecture for elliptic curves, Silverman has asked
how small $\hat{h}_{\phi}(y)$ can be for non-preperiodic points $y$.
More precisely, considering $\phi$ as a point in
the appropriate moduli space, and defining $h(\phi)$
to be the Weil height of that point, he stated the
following conjecture;
see \cite[Conjecture~4.98]{Sil} for a more general version.

\begin{conj}[Silverman, 2007]
\label{conj:ht}
Let $d\geq 2$.  Then there is a positive constant $M'=M'(d)>0$
such that for any $\phi\in\QQ(z)$ of degree $d$ and any
point $x\in\PP^1(\QQ)$ that is not preperiodic for $\phi$, we have
$\hat{h}_{\phi}(x)\geq M' h(\phi)$.
\end{conj}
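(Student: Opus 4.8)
Conjecture~\ref{conj:ht} is open even for $d=2$, so what follows is a line of attack rather than an argument; the computations in the rest of the paper are offered as evidence in its place. The overall idea is to decompose the canonical height place by place. After replacing $\phi$ by a $\PGL(2,\QQ)$-conjugate we may write $\phi=f/g$ with $f,g\in\ZZ[z]$ of content $1$; up to an additive error bounded in terms of $d$ alone, $h(\phi)$ is then comparable to $\frac{1}{d-1}\log\big|\Res(f,g)\big|$, the combined contribution of the primes dividing the resultant, that is, the primes of bad reduction of $\phi$.

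For each place $v$ of $\QQ$ the theory of Call and Silverman (see, e.g., \cite[\S3]{Sil}) supplies a local canonical height $\hat\lambda_{\phi,v}$ on $\PP^1(\CC_v)$, off the relevant pole, normalized so that $\hat h_\phi(x)=\sum_v \hat\lambda_{\phi,v}(x)$ for $x\in\PP^1(\QQ)$ and so that $\hat\lambda_{\phi,v}$ coincides with the naive local Weil height $\lambda_v$ whenever $\phi$ has good reduction at $v$. At the finitely many bad primes, and at $v=\infty$, the estimates of Call and Goldstine \cite{CaGol} bound the defect $|\hat\lambda_{\phi,v}-\lambda_v|$ by a constant depending only on $d$ times $\frac{1}{d-1}\log\|\Res(f,g)\|_v^{-1}$, the archimedean contribution being governed by the capacity (transfinite diameter) of the filled Julia set. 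Summing over $v$ and invoking the product formula would then yield an unconditional partial bound of the shape $\hat h_\phi(x)\ge h(x)-C(d)\,h(\phi)$, valid for every $x\in\PP^1(\QQ)$.

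This partial bound already settles Conjecture~\ref{conj:ht} for points of large height, so the real content is the opposite case: a non-preperiodic $x$ with $h(x)=O(h(\phi))$, i.e.\ a small-height non-preperiodic rational point. For such an $x$ the forward orbit is infinite and, by the functional equation $\hat h_\phi(\phi^n x)=d^n \hat h_\phi(x)$, the naive heights $h(\phi^n x)$ must eventually exceed any fixed multiple of $h(\phi)$. The plan would be a gap principle: show that this happens after at most $N=N(d)$ steps, apply the partial inequality to $\phi^N(x)$ to get $d^N\hat h_\phi(x)=\hat h_\phi(\phi^N x)\ge h(\phi^N x)-C(d)h(\phi)\gg h(\phi)$, and divide by $d^N$. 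Equivalently, one must bound uniformly in $\phi$ how long an orbit can linger in the region of height $\le \kappa\,h(\phi)$ before escaping, and some effective input from the dynamics on the Berkovich lines $\calP^1(\CC_v)$ — equidistribution of orbits toward the canonical measures, with a quantitative rate — seems unavoidable here.

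The hard part, and the reason the conjecture is open, is precisely this uniformity in $\phi$ as it ranges over the moduli space $\calM_d$: the statement is a dynamical analogue of Lang's height lower bound conjecture for elliptic curves, itself known only under a Szpiro-type hypothesis, so an unconditional proof should not be expected. The two obstructions I would anticipate to be decisive are (i) maps with many primes of bad reduction, where bounding $h(\phi)$ against the conductor $\sum_{p\mid \Res(f,g)}\log p$ requires a dynamical Szpiro inequality that is not available, and (ii) the archimedean place, where there is no good-reduction simplification and the needed lower bound for $\hat\lambda_{\phi,\infty}$ in terms of $h(\phi)$ must be extracted from complex dynamics (Green's functions of polynomial and rational Julia sets) with completely explicit constants — together with the uniform bound on the escape time $N$ of the previous paragraph, which is essentially the same difficulty in disguise. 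Realistically, then, one should aim for a conditional theorem — a uniform height lower bound granted a dynamical Szpiro conjecture — supplementing the unconditional inequality $\hat h_\phi(x)\ge h(x)-C(d)h(\phi)$; it is for the full unconditional statement of Conjecture~\ref{conj:ht} that the numerical data below is presented as support.
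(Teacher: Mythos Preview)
The statement is a \emph{conjecture}, and the paper does not prove it; it is presented as motivation, and the paper's contribution is purely empirical --- computational searches that produce no counterexamples and hence support the conjecture. You correctly recognize this at the outset and offer, not a proof, but a sketch of why the problem is hard and what a proof might require. In that sense there is no ``paper's proof'' to compare against: the paper's approach is to gather numerical evidence (Tables~\ref{tab:quadpoly}, \ref{tab:quadpolyodd}, \ref{tab:quadratht}) and to propose sharper quantitative forms (Conjectures~\ref{conj:qpolylang} and~\ref{conj:qrat}(d)), whereas your discussion is a theoretical outline of the obstructions.

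Your outline is broadly sensible --- the decomposition into local heights, the reduction to bounding how long an orbit can remain in a region of height $O(h(\phi))$, and the analogy with Lang's conjecture for elliptic curves are all standard and correct framings. One caution: the ``unconditional partial bound'' $\hat h_\phi(x)\ge h(x)-C(d)\,h(\phi)$ you assert is essentially the content of results like Lemma~\ref{lem:htfunc} of the present paper, but the constant there depends on the specific presentation of $\phi$ (through $|R|$ and $D$), not transparently on $h(\phi)$ alone with a constant depending only on $d$; making that uniform over $\calM_d$ is itself nontrivial and is part of the same circle of difficulties you identify. Also, local canonical heights for rational maps require a choice of divisor and do not sum to $\hat h_\phi$ quite as cleanly as you write; this is a technicality, but worth getting right if you develop the sketch further.
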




Conjecture~\ref{conj:ht}
essentially says that the height of a non-preperiodic rational
point must start to grow rapidly
within a bounded number of iterations.
Some theoretical evidence for Conjecture~\ref{conj:ht}
appears in \cite{Bak,Ing},
and computational evidence for polynomials of degree $d=2,3$
appears in \cite{REU07,DFK,Gil}.
The smallest known value of
$\hat{h}_{\phi}(x)/h(\phi)$
when $\phi$ is a polynomial of degree $2$
occurs for $x=\frac{7}{12}$ and
$\phi(z) = z^2 - \frac{181}{144}$.
The first few iterates of this pair $(x,\phi)$,
first discovered in \cite{Gil}, are
$$\frac{7}{12} \mapsto
-\frac{11}{12} \mapsto
\frac{5}{12} \mapsto
-\frac{13}{12} \mapsto
-\frac{1}{12} \mapsto
-\frac{5}{4} \mapsto
\frac{11}{36} \mapsto
-\frac{377}{324} \mapsto
\frac{2445}{26244} \mapsto \cdots.
$$
The small canonical height ratio
$\hat{h}_{\phi}(\frac{7}{12})/h(\phi)\approx .0066$ 
makes precise the observation that although the numerators
and denominators of the iterates eventually explode in size,
it takes several iterations for the explosion to get underway.

In this paper, we investigate quadratic polynomials and
rational functions with coefficients in $\QQ$, looking for
rational points that either are preperiodic or have small
canonical height.  More precisely, we search for
pairs $(x,\phi)$ for which either $x$ is preperiodic under $\phi$
or the ratio $\hat{h}_{\phi}(x)/h(\phi)$ is positive but
especially small.

Past computational investigations of this type
(such as those in \cite{DFK,Gil} for quadratic polynomials,
or \cite{REU07} for cubic polynomials)
have started with the map $\phi$ and then computed the
full set $\Preper(\phi,\QQ)$ of rational preperiodic points,
or the full set of rational points of small canonical height.
This is a slow process, because for any given $\phi$, the
region in $\PP^1(\QQ)$ that must be exhaustively searched
is usually quite large, and the overwhelming majority of points
in the region turn out to be false alarms.
However, since we are looking for only a \emph{single} point $x$ 
with an interesting forward orbit,
we can start with the point $x$ and \emph{then} search for
maps $\phi$ that give interesting orbits for $x$.
This strategy ends up testing far fewer pairs $(x,\phi)$ that
never really had a chance of being preperiodic or having
small height ratio, and hence
we can push our computations much further.

Our computations provide further evidence for
Conjectures~\ref{conj:ubc}, \ref{conj:poo}, and~\ref{conj:ht}.
First, in spite of
our very large search region, we found no
quadratic polynomials with any $\QQ$-rational
preperiodic structures not already
observed and classified in \cite{Poo}, providing support for
Conjecture~\ref{conj:ubc}, or more precisely, for Conjecture~\ref{conj:poo}.
Second, we also found
a number of new pairs $(x,\phi)$ with small canonical height ratio
for $\phi$ a quadratic polynomial,
but $(7/2, z^2-181/144)$ remains the record-holder.
This supports Conjecture~\ref{conj:ht}, as even our much larger
search region turned up no points breaking the previously
existing record.
In particular, we are led to propose the following.

\begin{conj}
\label{conj:qpolylang}
Let
$$C_{\textup{poly},2}:=\frac{\hat{h}_{z^2-181/144}
\big(7/12\big)}{h\big(181/144\big)}
= \frac{.03433\ldots}{\log 181} \approx .00660.$$
For any polynomial $\phi\in\QQ[z]$ with $\deg\phi=2$,
let $h(\phi)=h(c)$, where $c\in\QQ$ is the unique rational
number such that $\phi$ is conjugate to $z\mapsto z^2+c$.
Then $\hat{h}_{\phi}(x)\geq C_{\textup{poly},2} \cdot h(c)$
for any $x\in\PP^1(\QQ)$.
\end{conj}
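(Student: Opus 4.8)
Since Conjecture~\ref{conj:qpolylang} is an explicit, numerically sharp instance of the open Conjecture~\ref{conj:ht} (the case $d=2$, polynomials), I would not expect to prove it outright; the following is the plan I would pursue, together with the point at which I expect it to stall.

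The plan is first to pass to a canonical model. Both $\hat h_\phi$ and the normalization $h(\phi)=h(c)$ are invariant under $\PGL(2,\QQ)$-conjugation, so I would take $\phi(z)=z^2+c$ with $c=a/d\in\QQ$ in lowest terms; and, as in Conjecture~\ref{conj:ht}, I would restrict to non-preperiodic $x\in\QQ$ (for preperiodic $x$ one has $\hat h_\phi(x)=0$, so the inequality as literally stated needs this hypothesis) and assume $d>1$. Next, decompose the canonical height of a polynomial into its local Green's functions,
$$\hat h_\phi(x)=\sum_{v}g_{\phi,v}(x),\qquad g_{\phi,v}(y)=\lim_{n\to\infty}2^{-n}\log\max\{1,|\phi^n(y)|_v\},$$
the sum over all places $v$ of $\QQ$. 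At every prime $\ell\nmid d$ the reduction is good and $g_{\phi,\ell}=\log^+|\cdot|_\ell$, so the product formula gives
$$\hat h_\phi(x)=h(x)+\sum_{v\in S}\bigl(g_{\phi,v}(x)-\log^+|x|_v\bigr),\qquad S:=\{\infty\}\cup\{\ell\text{ prime}:\ell\mid d\}.$$
For $\phi(z)=z^2+c$ the filled Julia set at $v$ is the disk of ``escape radius'' $\rho_v$ with $\log\rho_v=\tfrac12\log^+|c|_v+O(1)$, whence $|g_{\phi,v}(y)-\log^+|y|_v|\le\tfrac12\log^+|c|_v+O(1)$ for $v\in S$. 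Since $\sum_{v\in S}\log^+|c|_v=h(c)$, this already gives $\hat h_\phi(x)\ge h(x)-\tfrac12 h(c)-O(1)$, which settles the conjecture whenever $h(x)\gtrsim\tfrac12 h(c)$; so I would treat the essential case as that of non-preperiodic $x$ of small naive height, where the lower bound must instead come from the dynamics.

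For that case the plan is to exploit escape times along the orbit. For a place $v$, let $n_v$ be the first iterate at which the orbit of $x$ escapes at $v$, that is, $|\phi^{n_v}(x)|_v>\rho_v$, with the convention $n_v=\infty$ if this never happens; since $x$ is not preperiodic, $n_v<\infty$ for at least one $v$. Once the orbit has escaped at $v$, the absolute value $|\phi^m(x)|_v$ roughly squares at each further step, so $g_{\phi,v}(x)\approx 2^{-n_v}\log|\phi^{n_v}(x)|_v\ge 2^{-n_v}\log\rho_v$ up to bounded error, and therefore
$$\hat h_\phi(x)\;\gtrsim\;\sum_{v\in S}2^{-n_v}\cdot\tfrac12\log^+|c|_v .$$
The crux of the plan is now an \emph{escape-time bound}: if one can show that $n_v\le N_0$ for every $v\in S$, with $N_0$ bounded in terms of $h(c)$ --- and, given that $C_{\textup{poly},2}\approx 2^{-7}$, conjecturally even with an \emph{absolute} constant $N_0$ --- then $\hat h_\phi(x)\gtrsim 2^{-N_0-1}h(c)$, which is Conjecture~\ref{conj:ht} for $d=2$; a careful bookkeeping of the implied constants would then be what remains to recover the sharp value $C_{\textup{poly},2}$ of Conjecture~\ref{conj:qpolylang}.

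The hard part, and the place where I expect the argument to stall, is exactly this uniform escape-time bound: showing that the orbit of a non-preperiodic rational point cannot remain non-escaping at the archimedean place and at every bad prime simultaneously for more than boundedly many iterates. This seems to be of the same depth as the conjecture itself --- the record pair $(7/12,\,z^2-181/144)$ is precisely an orbit that shadows the Julia set at the dominant places for about six iterates, so that the factors $2^{-n_v}$ cost nearly two orders of magnitude relative to the ``immediate-escape'' heuristic $\hat h_\phi(x)\gtrsim\tfrac12 h(c)$. Ruling out longer such coincidences appears to require a genuinely new ingredient --- for instance an effective adelic equidistribution statement for the orbit $\{\phi^n(x)\}_{n\ge 0}$, or a lower bound for the product over the orbit of the local distances to the Julia sets that precludes long near-tangency stretches at several places at once. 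The theoretical lower bounds of \cite{Bak,Ing} are of this general flavor, but apply over function fields, or to the family $z^d+c$ with non-optimal constants, and in any case fall well short of $C_{\textup{poly},2}$; lacking such an idea, the plan can only confirm Conjecture~\ref{conj:qpolylang} over a large but finite region of pairs $(x,\phi)$, which is the computation carried out in this paper.
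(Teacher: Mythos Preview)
You have correctly identified the essential point: Conjecture~\ref{conj:qpolylang} is genuinely a conjecture, and the paper offers no proof of it whatsoever. The paper's ``support'' for it is purely computational --- the search described in Algorithm~\ref{alg:quadpoly} found no pair $(x,c)$ with height ratio below that of $(7/12,-181/144)$ over a large search region, and on that basis the authors propose the conjecture. So there is nothing to compare your proposal against.

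Your sketch of a possible attack via local Green's functions and escape times is a reasonable framing of why the problem is hard, and your diagnosis of the obstruction --- that one would need a uniform bound on how long a non-preperiodic rational orbit can simultaneously shadow the Julia set at all bad places --- is accurate and matches the spirit of the discussion surrounding Lemma~\ref{lem:quadpoly3}. One small correction: the inequality in the conjecture as stated does \emph{not} exclude preperiodic $x$, and it need not, since for preperiodic $x$ both sides are $\geq 0$ with the left side equal to $0$; the content is entirely in the non-preperiodic case, but the statement is vacuously true otherwise (except when $h(c)=0$, i.e., $c\in\{0,\pm 1\}$, where again both sides vanish or the inequality is trivial). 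Also, your remark that the results of \cite{Bak,Ing} ``apply over function fields'' is not quite right --- Ingram's bound in \cite{Ing} is over number fields for $z^d+c$ --- but you are correct that the constants obtained there are far from $C_{\textup{poly},2}$.
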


Third, regarding quadratic rational functions, which had
not been previously studied at this level of generality,
we found several new preperiodic orbit structures.
Most notably, we found
many degree two maps with 14 $\QQ$-rational preperiodic points,
including one with a $\QQ$-rational periodic cycle of period~7:
\begin{equation}
\label{eq:7per}
\phi(z) = \frac{4655z^2 - 4826z + 171}{4655z^2 - 8071z + 798},
\end{equation}
with periodic cycle
$$\infty \mapsto 1
\mapsto 0
\mapsto \frac{3}{14}
\mapsto \frac{19}{21}
\mapsto \frac{1}{7}
\mapsto \frac{57}{35}
\mapsto \infty.$$
Each of the seven rational points above
also has exactly one rational preimage
outside the cycle.  Those points are:
$$\frac{2}{19},
\frac{57}{295},
\frac{9}{245},
\frac{563}{665},
-\frac{29}{5},
\frac{3}{190},
\frac{27}{10},$$
respectively, and $\Preper(\phi,\QQ)$
consists of precisely these fourteen points.

Fourth, we found pairs $(x,\phi)\in\PP^1(\QQ)\times\QQ(z)$ with
$\deg(\phi)=2$ and with canonical height ratio
$\hat{h}_\phi(x)/h(x)$ much smaller than $.0066$, the
record for quadratic \emph{polynomials}.  But again,
the pairs $(x,\phi)$ of especially small height ratio, as well
as the first maps with $14$ rational preperiodic points,
turned up early in our large search.
This support for Conjectures~\ref{conj:ubc}
and~\ref{conj:ht} inspired the following more
specific statements for quadratic rational maps.

\begin{conj}
\label{conj:qrat}
Let $\psi(z)=(10z^2 - 7z - 3)/(10z^2 + 37z + 9)$, and let
$$C_{\textup{rat},2}:=\hat{h}_{\psi}(\infty)/h(\psi)
\approx .000466.$$
For any rational function $\phi\in\QQ(z)$ with $\deg\phi=2$,
and any $x\in\PP^1(\QQ)$
\begin{enumerate}
\item $\#\Preper(\phi,\QQ)\leq 14$.
\item If $x$ is preperiodic, then $\#\calO_{\phi}(x)\leq 8$.
\item If $x$ is periodic, then $\#\calO_{\phi}(x)\leq 7$.
\item If $x$ is not preperiodic,
then $\hat{h}_{\phi}(x)\geq C_{\textup{rat},2} \cdot h(\phi)$.
\end{enumerate}
\end{conj}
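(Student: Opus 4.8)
This statement is a \emph{conjecture} rather than a theorem, and in the strong uniform form asserted here it lies beyond what current techniques can reach: parts~(a)--(c) are special cases of the Morton--Silverman uniform boundedness conjecture (Conjecture~\ref{conj:ubc}), which is open even for quadratic polynomials (Conjecture~\ref{conj:poo}), and part~(d) asks for a sharp constant in a height lower bound of the type predicted by Silverman. So what follows is necessarily a program for chipping away at the conjecture rather than a proof.

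For parts~(a), (b), and~(c), the plan is to work in the moduli space $\calM_2$ of degree-two rational maps, which by Milnor and Silverman is isomorphic to $\AAA^2$ via the symmetric functions of the fixed-point multipliers. Each admissible preperiodic \emph{portrait} $P$ --- a finite directed graph recording an orbit structure --- determines a dynamical modular curve $X_1(P)$ parametrizing pairs (quadratic map, marked copy of $P$ in $\PP^1$), together with a forgetful map $X_1(P)\to\calM_2$; a map $\phi\in\QQ(z)$ realizing $P$ over $\QQ$ yields a rational point of $X_1(P)$ not lying over the locus of degenerate maps. The steps are then: (i) enumerate the portraits $P$ not already known to occur (every portrait strictly larger than the size-$14$ examples of this paper, every cycle of length $\geq 8$, every tail-plus-$3$-cycle of length $\geq 6$, and so on); (ii) for each such $P$, compute an explicit model of $X_1(P)$ and its genus; (iii) when the genus is $\geq 2$, invoke Faltings for finiteness and, where feasible, run Chabauty and Mordell--Weil-sieve computations to show the only rational points are the degenerate ones; when the genus is $0$ or $1$, argue directly by rational parametrization or by descent on the associated elliptic curve. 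This is exactly the method used for isolated portraits by Manes, Poonen, Blanc--Canci--Elkies, and others.

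For part~(d), the plan is the standard decomposition of the canonical height into local heights, $\hat{h}_\phi(x)=\sum_v \hat\lambda_{\phi,v}(x)$, with the aim of showing the sum cannot be a very small multiple of $h(\phi)$ unless $x$ is preperiodic. One bounds the archimedean term and the terms at primes of good reduction using that $\hat\lambda_{\phi,v}$ differs from the naive local height by an explicitly bounded amount, so the only danger is a large negative contribution at the finitely many primes of bad reduction; controlling these, in the spirit of the lower bounds of Baker \cite{Bak} and Ingram \cite{Ing}, gives a bound of the conjectured shape but with a constant depending on a bound for the number of bad primes. Removing that dependence, and in particular pinning the constant down to the sharp value $C_{\textup{rat},2}$, is equivalent to classifying all pairs $(x,\phi)$ of small height ratio --- the same kind of exhaustive problem as in parts~(a)--(c).

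The main obstacle, in every part, is uniformity: the modular-curve method disposes of any fixed finite list of portraits, and the local-height method yields a non-uniform lower bound, but passing from ``finitely many bad cases, each handled'' to an absolute constant valid for \emph{all} quadratic $\phi\in\QQ(z)$ is precisely the content of the open conjectures of Morton--Silverman and Silverman and is not accessible by these methods alone. Short of a breakthrough there, the realistic target is to prove each part with $14$, $8$, $7$, and $C_{\textup{rat},2}$ replaced by explicit but non-sharp values, or --- as this paper does --- to verify the sharp statements on ever larger search regions.
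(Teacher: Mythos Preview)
You are correct that this is a conjecture, not a theorem, and the paper does not prove it either. The paper's ``support'' for Conjecture~\ref{conj:qrat} is purely computational: Algorithm~\ref{alg:quadrat} searched over triples $(x_3,x_4,x_5)\in\QQ^3$ of height at most $\log 100$, found no preperiodic orbits longer than those in Table~\ref{tab:quadratlen8} and no height ratio smaller than that of the pair $(\infty,\psi)$ in Table~\ref{tab:quadratht}, and on that basis the authors \emph{propose} parts~(a)--(d). There is no proof, and none is claimed.

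Your proposal therefore goes well beyond what the paper does: you sketch an actual research program (dynamical modular curves $X_1(P)$ plus Faltings/Chabauty for parts~(a)--(c), local-height decomposition in the style of Baker and Ingram for part~(d)), whereas the paper offers only numerical evidence. Your assessment of the obstacles is accurate --- the modular-curve method handles any fixed portrait but cannot by itself yield uniformity, and the local-height bounds depend on the number of bad primes --- and your identification of parts~(a)--(c) as instances of Conjecture~\ref{conj:ubc} and part~(d) as a sharp form of Conjecture~\ref{conj:ht} matches the paper's own framing. In short: there is nothing to compare your argument against, because the paper has none; your outline is a sound description of how one would attack the problem, and your caveat that the sharp constants are out of reach is exactly right.
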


The outline of the paper is as follows.  In Section~\ref{sect:background}
we review some background: heights, canonical heights,
multipliers of periodic points, valuations, and good reduction.
In Section~\ref{sect:qpolys} we recall and then sharpen some
known facts about the dynamics of quadratic polynomials over~$\QQ$,
and we describe our search algorithm.  We then summarize 
and discuss our data from that search.
Finally, in Section~\ref{sect:qrats}, we do a similar analysis
for quadratic rational functions over $\QQ$.  We also state
and prove the following result suggested by our data;
see Theorem~\ref{thm:52ell} for a more precise version.

\begin{thmblank}
Let $X_{5,2}$ be the parameter space of all pairs
$(x,\phi)$ with $x\in\PP^1$ and $\phi$ a rational function
of degree~$2$, up to coordinate change, for which the
forward orbit of $x$ consists of five strictly preperiodic
points followed by a periodic cycle of period~$2$.
Then $X_{5,2}$ is birational over $\QQ$ to an elliptic surface
of positive rank over $\QQ(t)$
with infinitely many $\QQ$-rational points.
\end{thmblank}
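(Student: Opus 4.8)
The plan is to parametrize the space $X_{5,2}$ explicitly by coordinates, obtain a set of polynomial equations cutting it out, and then exhibit it as (birational to) an elliptic surface. First I would set up normalized coordinates: since we are working up to the $\PGL(2)$-action, I would place the $2$-cycle at two convenient points, say $0 \leftrightarrow \infty$ (so that $\phi$ has the form $\phi(z) = c/(z^2 + az)$ for parameters $a,c$, forcing $0$ and $\infty$ to swap), and then the preperiodic tail of length~5 is a chain $x = x_5 \mapsto x_4 \mapsto x_3 \mapsto x_2 \mapsto x_1 \mapsto x_0$, where $x_0 \in \{0,\infty\}$ and none of $x_5,\dots,x_1$ lies on the $2$-cycle. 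Using the residual scaling symmetry (the stabilizer of $\{0,\infty\}$ in $\PGL(2)$, which is still a $1$-dimensional torus together with the swap $z \mapsto c'/z$) I would further normalize one more coordinate, e.g.\ fix $x_1$ or a coefficient. This leaves a configuration whose moduli have dimension~$2$, consistent with an elliptic surface over a rational base.

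Next I would translate the orbit conditions $\phi(x_{i+1}) = x_i$ into equations. Each step $\phi(x_{i+1}) = x_i$ is one algebraic equation in the $x_i$'s and the map-parameters $(a,c)$; pulling back along the chain, and eliminating the intermediate points $x_2,x_3,x_4$ and one of the map parameters, I expect to be left with a single equation in two remaining variables — say the base parameter $t$ (a ratio built from $x_1$ and the parameters) and one fiber coordinate — which should be visibly of degree~$\le 3$ (or reducible to a cubic) in the fiber coordinate. That cubic, written as $y^2 = (\text{cubic in } t)(\text{fiber variable})$ or more likely directly in Weierstrass-ish shape after clearing denominators, is the elliptic surface $E/\QQ(t)$. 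I would then run a standard Weierstrass-reduction to put it in the form $y^2 = x^3 + A(t) x + B(t)$ with $A, B \in \QQ[t]$, thereby establishing the birational claim.

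For the two remaining assertions — positive rank and infinitely many $\QQ$-rational points — I would produce an explicit non-torsion section. There is a natural candidate: the "diagonal" or "collapsed" locus where the length-5 tail degenerates (for instance where two consecutive $x_i$ coincide, or where $x_5$ is itself forced onto a shorter tail) gives a rational point on each fiber, and I would check it is a section $P(t) \in E(\QQ(t))$. To see $P$ is of infinite order I would specialize $t$ to one or two well-chosen rational values, compute the corresponding point on the specialized elliptic curve over $\QQ$, and verify via a canonical-height computation (or a $2$- or $3$-descent / explicit nontorsion check, e.g.\ that its multiples have unbounded naive height) that its image is non-torsion; by the specialization theorem of Silverman, $P$ is then non-torsion in $E(\QQ(t))$, so the rank is positive and $E(\QQ(t))$ — hence $X_{5,2}(\QQ)$ — is infinite. (Concretely, the $14$-point maps found in the search, with a length-$6$ tail to a $2$-cycle, truncate to give explicit $\QQ$-points of $X_{5,2}$, which is exactly what one expects to use as the seed.)

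The main obstacle I anticipate is the elimination step: carrying out the resultant computations to collapse the system $\{\phi(x_{i+1}) = x_i\}_{i=0}^{4}$ down to a single bivariate equation, and then recognizing/massaging that equation into genus-$1$ Weierstrass form, is where the real work lies — the intermediate polynomials are large and one must choose the normalization and the elimination order carefully so that the result is manifestly a cubic in one variable rather than something of higher genus. A secondary subtlety is making sure the birational map is defined over $\QQ$ (not just $\bar{\QQ}$) and that the degenerate/collapsed locus used for the section genuinely lies on $X_{5,2}$ in the closure and is not a spurious component introduced by clearing denominators; this requires checking that the tail stays strictly preperiodic of length exactly~$5$ (no early collision with the $2$-cycle) on a Zariski-dense open set, which pins down which component of the elimination variety is $X_{5,2}$.
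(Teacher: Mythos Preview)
Your overall architecture---normalize, write out the orbit conditions, eliminate to a bivariate equation, recognize a cubic fibration, then exhibit a non-torsion section via specialization---is sound and is indeed what the paper does. But there is one concrete error and one substantive strategic difference worth flagging.

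\textbf{The error.} Your normal form $\phi(z)=c/(z^2+az)$ is too restrictive: a general degree-$2$ map swapping $0$ and $\infty$ has the shape $\phi(z)=(bz+c)/(z^2+az)$, with three parameters before using the residual torus. Setting $b=0$ forces $\infty$ to be a critical point (a double zero of $\phi$), which is a codimension-one condition. With your form, after scaling you have only one map-parameter plus the starting point, so the orbit condition cuts you down to a \emph{curve}, not a surface; you would miss almost all of $X_{5,2}$. The fix is easy---restore the $bz$ term---but it is a genuine gap as written.

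\textbf{The strategic difference.} The paper does not normalize at the $2$-cycle. Instead it uses the ambient coordinates on $\calP\calM'_2$ (first three orbit points at $\infty,1,0$), proves first that $X_{4,2}$ is birational to $\PP^2$ (parametrized by $(x_3,x_4)$, with $x_5$ solved explicitly), and then realizes $X_{5,2}$ as the preimage of $X_{4,2}$ under the forgetful map $(x,\phi)\mapsto(\phi(x),\phi)$. Concretely, $X_{5,2}$ is cut out by the vanishing of the \emph{denominator} of $\psi(w)$, where $\psi$ is the $X_{4,2}$-map and $w$ is the new preimage of $\infty$; since $\psi$ has degree $2$, this is quadratic in $w$ over $\QQ(x_3,x_4)$. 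One blowup along a visible singular line and a linear change of base parameter then give an explicit Weierstrass cubic over $\QQ(t)$. This buys a dramatically shorter elimination: you never iterate five times and take resultants, you just write down a single quadratic and massage it. Your approach, by contrast, requires eliminating four intermediate orbit points from a system of five equations, which (as you correctly anticipate) is where the computation balloons.

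\textbf{On the section.} Your proposed section---a degeneration locus where the tail collapses---is the right intuition but vague, and such loci are typically in the boundary of $X_{5,2}$ rather than on it. The paper instead writes down an explicit point $P=(0,t^3(t+1)^2)$ on the Weierstrass model, specializes at $t=1$ to land on a rank-$1$, trivial-torsion curve in Cremona's tables, and observes $P$ hits the generator. (Amusingly, $[n]P$ for $|n|\le 5$ all lie in the degeneracy locus of the birational map back to $X_{5,2}$, so your instinct that degenerate configurations furnish the section is not wrong---but one must exhibit the point on the elliptic model, not on the moduli interpretation.)
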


\section{Background}
\label{sect:background}

The standard (Weil) {\em height function} on $\QQ$ is the function
$h:\PP^1(\QQ)\to\RR$ given by
$$h(x) := \log\max\{|m|,|n|\},$$ if we write
$x=m/n$ in lowest terms and write $\infty$ as $1/0$.
It has a well-known extension to a function $h:\PP^1(\overline{\QQ})\to\RR$,
although that extension will not be of much concern to us here.
The height function satisfies two important properties.
First, 
for any $\phi(z)\in\QQ(z)$,
there is a constant $C=C(\phi)$ such that
$$\big|h(\phi(x)) - d\cdot h(x)\big| \leq C
\qquad \text{for all } x\in\PP^1(\Qbar),$$
where $d=\deg\phi$.
Second, for any bound $B\in\RR$,
\begin{equation}
\label{eq:htfin}
\{x\in\PP^1(\QQ) : h(x) \leq B\} \quad \text{ is a  finite set}.
\end{equation}

For any fixed $\phi\in\QQ(z)$ of degree $d\geq 2$,
the {\em canonical height} function
$\hat{h}_{\phi}:\PP^1(\Qbar)\to\RR$  for $\phi$ is given by
$$\hat{h}_{\phi}(x) := \lim_{n\to\infty} d^{-n} h(\phi^n(x)),$$
and it satisfies the functional equation
\begin{equation}
\label{eq:hfeqn}
\hat{h}_{\phi}(\phi(x)) = d\cdot \hat{h}_{\phi}(x)
\qquad \text{for all } x\in\PP^1(\Qbar).
\end{equation}
In addition,
there is a constant $C'=C'(\phi)$ such that
\begin{equation}
\label{eq:hclose2}
\big|\hat{h}_{\phi}(x) - h(x)\big| \leq C'
\qquad \text{for all } x\in\PP^1(\Qbar).
\end{equation}
Northcott's Theorem \cite{Nor} that $\#\Preper(\phi,\QQ)<\infty$
is immediate from
properties \eqref{eq:htfin}, \eqref{eq:hfeqn}, and \eqref{eq:hclose2},
since they imply that for any $x\in\PP^1(\QQ)$,
$\hat{h}(x)=0$ if and only if $x$ is preperiodic under $\phi$.
(In fact, Northcott proved this equivalence for any $x\in\PP^1(\Qbar)$).
Meanwhile, our canonical height computations will require
the following result.

\begin{lemma}
\label{lem:htfunc}
Let $\phi=f/g\in\QQ(z)$, where
$f,g$ are relatively prime polynomials in $\ZZ[z]$
with $d:=\max\{\deg f, \deg g\}\geq 2$.
Let $R=\Res(f,g)\in\ZZ$ be the resultant of $f$ and $g$, and let
$$D:= \min_{t\in\RR\cup\{\infty\}}
\frac{\max\{ |f(t)|,|g(t)|\}}{\max\{ |t|^d , 1\}}.$$
Then $D>0$, and for all $x\in\PP^1(\QQ)$
and all integers $i\geq 0$,
$$\hat{h}_{\phi}(x) \geq
d^{-i}\bigg[ h\big(\phi^i(x)\big)
- \frac{1}{d-1}\log\bigg(\frac{|R|}{D}\bigg) \bigg].$$
\end{lemma}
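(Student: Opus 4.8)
The plan is to estimate, at each place $v$ of $\QQ$, the discrepancy between the naive height of a point and the height of its image under $\phi$, and then iterate. Write a point $x \in \PP^1(\QQ)$ in homogeneous coordinates $[a:b]$ with $a,b \in \ZZ$ coprime, and let $F,G \in \ZZ[X,Y]$ be the homogenizations of $f,g$, so that $\phi([a:b]) = [F(a,b):G(a,b)]$. The key local observation is that for the non-archimedean places, coprimality of $F(a,b)$ and $G(a,b)$ can fail only at primes dividing the resultant $R = \Res(f,g) = \Res(F,G)$: the standard resultant identity gives polynomials $A_1,B_1,A_2,B_2 \in \ZZ[X,Y]$ with $A_1 F + B_1 G = R X^{2d-1}$ and $A_2 F + B_2 G = R Y^{2d-1}$ (up to a harmless power), so any common prime factor of $F(a,b),G(a,b)$ must divide $R\, a^{2d-1}$ and $R\, b^{2d-1}$, hence divides $R$ since $\gcd(a,b)=1$. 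This shows $\log\max\{|F(a,b)|,|G(a,b)|\} \ge h(\phi(x)) \ge \log\max\{|F(a,b)|,|G(a,b)|\} - \log|R|$, and in fact only the lower bound is needed here.

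Next I would handle the archimedean contribution. The quantity $D$ is defined precisely as the infimum over $t \in \RR \cup \{\infty\}$ of $\max\{|f(t)|,|g(t)|\}/\max\{|t|^d,1\}$; homogenizing, this says $\max\{|F(a,b)|,|G(a,b)|\} \ge D \cdot \max\{|a|,|b|\}^d$ for all real $(a,b)$, in particular for our integer pair. That $D > 0$ follows because $F$ and $G$ have no common real projective zero (a common zero would force $R = 0$, contradicting $\deg\phi = d \ge 2$ with $f,g$ coprime), so the continuous function $\max\{|F|,|G|\}$ is strictly positive on the compact circle $\{|a|^2+|b|^2 = 1\}$ and attains a positive minimum, which gives the homogeneous inequality by scaling. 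Combining the two displays:
\[
h(\phi(x)) \;\ge\; \log\max\{|F(a,b)|,|G(a,b)|\} - \log|R|
\;\ge\; d\, h(x) + \log D - \log|R|,
\]
i.e. $h(\phi(x)) \ge d\, h(x) - \log(|R|/D)$ for every $x \in \PP^1(\QQ)$.

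Finally I would iterate this bound and pass to the limit. Applying it repeatedly starting from $\phi^i(x)$ gives, for every $j \ge 0$,
\[
h\big(\phi^{i+j}(x)\big) \;\ge\; d^{j}\, h\big(\phi^i(x)\big) - \big(d^{j-1} + d^{j-2} + \cdots + 1\big)\log\!\big(|R|/D\big)
\;\ge\; d^{j}\Big[h\big(\phi^i(x)\big) - \tfrac{1}{d-1}\log(|R|/D)\Big].
\]
Dividing by $d^{i+j}$ and letting $j \to \infty$, the left side tends to $\hat h_\phi(x)$ by definition of the canonical height, while the right side equals $d^{-i}\big[h(\phi^i(x)) - \tfrac{1}{d-1}\log(|R|/D)\big]$ for every $j$, yielding the claimed inequality. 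I expect the only genuinely delicate point to be the bookkeeping around the exact power of the variable appearing in the resultant identity (and hence confirming that coprimality degrades by at most a factor dividing $R$, with no stray extra factor); the positivity of $D$ and the telescoping geometric sum are routine once that local estimate is in hand.
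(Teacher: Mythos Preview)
Your proposal is correct and follows essentially the same route as the paper: establish the one-step inequality $h(\phi(x)) \geq d\,h(x) - \log(|R|/D)$, iterate it starting from $\phi^i(x)$, sum the geometric series, divide by $d^{i+j}$, and let $j\to\infty$. The only difference is that the paper cites Silverman--Tate for the one-step height inequality (in the slightly sharper pointwise form $h(\phi(x)) \geq d\,h(x) + \log(F(x)/|R|)$ before minimizing $F$ to $D$), whereas you reprove it directly via the resultant identity and the archimedean bound; this is the standard argument behind that citation, so there is no substantive divergence.
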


\begin{proof}
The function
$F(t)=\max\{ |f(t)|,|g(t)|\} / \max\{ |t|^d , 1\}$ is defined
at $\infty$ because $\max\{\deg f, \deg g\}=d$.  Moreover, $F$
is real-valued, positive, and continuous on the compact set
$\RR\cup\{\infty\}$, and hence the minimum $D$ is indeed
both attained and positive.
As shown in the proof of Lemma~III.3$'$.(b) of \cite{ST},
$$h(\phi(x)) \geq d h(x) + \log\bigg(\frac{F(x)}{|R|}\bigg)$$
for all $x\in \PP^1(\QQ)$.
Bounding $F(x)$ by $D$, and applying $\phi$
repeatedly to $\phi^i(x)$, we have
$$h\big(\phi^{i+m}(x)\big) \geq d^m h\big(\phi^i(x)\big) +
(1+d+d^2+ \cdots + d^{m-1}) \log\bigg(\frac{D}{|R|}\bigg)$$
for all $x\in\QQ$ and all $i,m\geq 0$.
Dividing by $d^{i+m}$ and taking the limit as $m\to\infty$ gives
the desired inequality.
\end{proof}

For more background on heights and canonical heights,
see \cite[Section~B.2]{HS}, \cite[Chapter~3]{Lang}, 
or \cite[Chapter~3]{Sil}.

The following notion of good reduction of a
dynamical system, which we state here only over $\QQ$,
was first proposed in \cite{MS1}.

\begin{defin}
\label{def:goodred}
Let $\phi(z)=f(z)/g(z)\in\QQ(z)$ be a rational function,
where $f,g\in\ZZ[z]$ have no common factors in $\ZZ[z]$.
Let $d=\deg\phi = \max\{\deg f,\deg g\}$,
and let $p$ be a prime number.
Let $\bar{f},\bar{g}\in\Fp[z]$ be the reductions of
$f$ and $g$ modulo $p$.  If $\deg(\bar{f}/\bar{g})=d$,
we say that $\phi$ has {\em good reduction} at $p$;
otherwise, we say $\phi$ has {\em bad reduction} at $p$.
\end{defin}

Of course, $\deg(\bar{f}/\bar{g})\leq d$ always; but the
degree can drop if either $\max\{\deg\bar{f},\deg\bar{g}\}<d$
or $\bar{f}$ and $\bar{g}$ are no longer relatively prime.
If $\phi=a_d z^d + \cdots +a_0$ is a polynomial, then
$\phi$ has good
reduction at $p$ if and only if $v_p(a_i)\geq 0$ for all $i$,
and $v_p(a_d)=0$; see \cite[Example~4.2]{MS2}.
Here, 
$v_p(x)$ denotes the $p$-adic valuation of $x\in\QQ^{\times}$,
given by $v_p(p^r a/b) = r$ where $a,b,r\in\ZZ$ and $p\nmid ab$.

Finally,
if $\phi\in\QQ(z)$ is a rational function, and $x\in\Qbar$
is a periodic point of $\phi$ of minimal period $n$,
the \emph{multiplier} of $x$ is $(\phi^n)'(x)$.  The multiplier
is invariant under coordinate change, and therefore one can compute
the multiplier of a periodic point at $x=\infty$ by changing coordinates
to move it elsewhere.
We will need multipliers to discuss rational functions
in Section~\ref{sect:qrats}.

\section{Quadratic Polynomials}
\label{sect:qpolys}

It is well known that (except in characteristic 2),
any quadratic polynomial is conjugate over the base field
to a unique one of the form $\phi_c(z):=z^2+c$.
(Scaling guarantees the polynomial is monic, and an
appropriate translation to complete the square eliminates
the linear term.)  Thus, the moduli space of quadratic
polynomials up to conjugacy is $\AAA^1$, where the parameter
$c\in\AAA^1$ corresponds to $\phi_c$.
For the purposes of Conjecture~\ref{conj:ht}, then,
the height of $\phi_c$ itself is
$$h(\phi_c) := h(c).$$
In addition, let us denote the canonical height associated to
$\phi_c$ as simply $\hat{h}_c$.

However, we are really interested in \emph{pairs}
$(x,c)$ for which the point $x\in\PP^1$ either is preperiodic
with a long forward orbit under $\phi_c$, or else has very
small canonical height under $\phi_c$, as compared with $h(c)$.
The appropriate moduli space of such pairs is therefore
$\PP^1\times\AAA^1$.  If $x=\infty$, then
$x$ is simply a fixed point with
canonical height $0$.  Thus, we may restrict ourselves
to the subspace $\AAA^2\cong \AAA^1\times\AAA^1\subseteq\PP^1\times\AAA^1$.
When working over $\QQ$, we can restrict ourselves even further.
The following lemma is well known, but we include the short proof
for the convenience of the reader.

\begin{lemma}
\label{lem:quadpoly}
Let $x,c\in\QQ$, and suppose that $x$ is preperiodic
under $\phi_c(z) = z^2+c$.  Then writing $x=m/n$ in lowest
terms, we must have $c=k/n^2$, where $k\in\ZZ$ satisfies
\begin{enumerate}
\item  $\dsps k\equiv -m^2 \pmod{n}$,
\item  the integers $n$ and $(k+m^2)/n$ are relatively prime,
\item  $k\leq n^2/4$, and
\item  $\dsps |x|\in
\begin{cases}
[0,2] & \text{ if } c\geq -2,
\\
[\sqrt{-c-B},B] & \text{ if } c < -2,
\end{cases} \quad$
where $B:=(1+\sqrt{1-4c})/2$.
\end{enumerate}
\end{lemma}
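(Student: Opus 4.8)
The plan is to exploit in two complementary ways the hypothesis that the forward orbit $\calO_{\phi_c}(x)\subset\QQ$ is finite: $p$-adically, to force $c$ to have reduced denominator exactly $n^2$ and to extract the congruences (a) and (b); and archimedeanly, to obtain the size conditions (c) and (d) from the elementary real dynamics of $t\mapsto t^2+c$.

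First I would handle the non-archimedean side. Fix a prime $p$, write $x_i=\phi_c^i(x)$ so that $x_{i+1}=x_i^2+c$, and note that $(x_i)$ takes only finitely many values, so $\{v_p(x_i)\}$ is bounded below. The mechanism is the ultrametric estimate $v_p(x_{i+1})=\min\{2v_p(x_i),v_p(c)\}$ whenever $2v_p(x_i)\neq v_p(c)$: if one ever reaches a point with $v_p(x_i)<0$ and $2v_p(x_i)<v_p(c)$, then $v_p(x_{i+1})=2v_p(x_i)<v_p(x_i)<0$ and the same two inequalities recur, forcing $v_p(x_j)\to-\infty$, a contradiction. Feeding in $v_p(x_0)=v_p(m/n)$, this rules out $v_p(c)<0$ when $p\nmid n$, and rules out both $v_p(c)<-2v_p(n)$ and $v_p(c)>-2v_p(n)$ when $p\mid n$. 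Hence $v_p(c)\geq 0$ for $p\nmid n$ and $v_p(c)=-2v_p(n)$ for $p\mid n$; consequently $k:=n^2c\in\ZZ$ with $\gcd(k,n)=1$, and the reduced denominator of $c$ is exactly $n^2$. To obtain (a) and (b) I would then rerun precisely this argument on $x_1=(m^2+k)/n^2$, which is again a preperiodic rational point (its forward orbit is contained in that of $x$): its reduced denominator $n_1$ must satisfy $n_1^2=($ reduced denominator of $c)=n^2$, so $n_1=n$. Writing $x_1=m_1/n$ in lowest terms then forces $m^2+k=n\,m_1$ with $\gcd(m_1,n)=1$, i.e.\ $k\equiv -m^2\pmod n$ and $(k+m^2)/n$ coprime to $n$, which are exactly (a) and (b).

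For the archimedean side, note $x\in\QQ\subset\RR$, so its real forward orbit is finite, hence bounded. If $c>1/4$ then $\phi_c(t)-t=t^2-t+c>0$ for every real $t$ (negative discriminant), so every real orbit is strictly increasing and unbounded; since $x$ is preperiodic this gives $c\leq 1/4$, whence $k=n^2c\leq n^2/4$, which is (c). Assuming $c\leq 1/4$, let $B=(1+\sqrt{1-4c})/2\geq 1/2$ be the larger real fixed point, so $B^2+c=B$ and the other fixed point is $1-B\leq B$. A short computation shows any real $t$ with $|t|>B$ has unbounded orbit: for $t>B$ one has $\phi_c(t)-t=(t-B)\bigl(t-(1-B)\bigr)>0$, so the orbit is increasing and bounded below by $B$, hence unbounded; and $t<-B$ gives $\phi_c(t)=t^2+c>B^2+c=B$, reducing to the previous case. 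Therefore every point of $\calO_{\phi_c}(x)$, in particular $x$ and $\phi_c(x)=x^2+c$, lies in $[-B,B]$. Since $c\geq -2$ implies $\sqrt{1-4c}\leq 3$ and hence $B\leq 2$, this yields $|x|\in[0,2]$ in that case; and $-B\leq x^2+c$ gives $x^2\geq -c-B$ (a positive quantity exactly when $c<-2$), so $|x|\in[\sqrt{-c-B},\,B]$ when $c<-2$. This establishes (d).

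The archimedean step is routine analysis of a real quadratic; the substance lies in the $p$-adic step, and within it I expect the genuinely delicate point to be the passage to (a) and (b). Knowing only that $c=k/n^2$ with $\gcd(k,n)=1$ does not suffice: one must push the \emph{iterate} $x_1=(m^2+k)/n^2$ back through the same valuation argument to see that cancellation in $(m^2+k)/n^2$ removes exactly one factor of $n$ from the denominator and no more. Carefully matching $v_p$ of numerators and denominators along the orbit is therefore the part that will require the most care.
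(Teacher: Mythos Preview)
Your proposal is correct and follows essentially the same route as the paper: a $p$-adic valuation argument to pin down the denominator of $c$ as $n^2$, then reapplying that result to the iterate $x_1=(m^2+k)/n^2$ to extract (a) and (b), followed by elementary real dynamics for (c) and (d). The only cosmetic differences are that the paper carries out the $p$-adic step via explicit fraction arithmetic rather than the ultrametric $\min$ formula, and in (d) treats the cases $c\geq -2$ and $c<-2$ with separate inequalities rather than your unified argument via $B$; your self-assessment that (a) and (b) are the ``delicate'' part is perhaps overstated, since once the denominator result is in hand they follow immediately.
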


\begin{proof}
Write $c=k/N$, where $k,N\in\ZZ$ are relatively prime, and $N\geq 1$.
If $N\neq n^2$, then there is a prime $p$ such that
$v_p(N)\neq 2 v_p(n)$.
Let $r:=v_p(n)$ and $s:= v_p(N)$.  Then
\begin{equation}
\label{eq:mnp}
\phi_c(x) = \frac{m^2}{n^2} + \frac{k}{N}
= \frac{m^2 N + kn^2}{n^2 N}.
\end{equation}
The denominator of \eqref{eq:mnp} has $v_p(n^2 N) = 2r+s$.
If $s>2r$, then the numerator has $v_p(m^2 N + kn^2) = 2r$,
and hence $v_p(\phi_c(x))=-s$.
On the other hand,
if $s<2r$, then the numerator has $v_p(m^2 N + kn^2) = s$,
and hence $v_p(\phi_c(x))=-2r$.
Either way, then, $v_p(\phi_c(x))<-r=v_p(x)$.
Thus, $v_p(\phi_c^i(x))$ will strictly decrease with $i$, contradicting
the hypothesis that $x$ is preperiodic.
Therefore, $c=k/n^2$, with $k\in\ZZ$ relatively prime to $n$.

Applying the previous paragraph to \emph{any} preperiodic point,
not just $x$, we see that all preperiodic points of $\phi$ in $\QQ$
have denominator exactly $n$ when written in lowest terms.
In particular, $\phi_c(x) = (k+m^2)/n^2$ is preperiodic.
Hence, $n|(k+m^2)$, and $(k+m^2)/n$ must be relatively prime to $n$,
giving us statements~(a) and~(b).

Next, if $c>1/4$, then for any $y\in\RR$, we have
$$\phi(y) -y = y^2 - y +c > y^2 - y + \frac{1}{4}
= \Big(y-\frac{1}{2}\Big)^2 \geq 0.$$
Thus, $\{\phi_c^n(x)\}_{n\geq 0}$ is a strictly increasing
sequence, contradicting the hypothesis that $x$ is preperiodic.
Hence, $c\leq 1/4$, giving us statement~(c).

Finally, if $c\geq -2$, then for any $y\in\RR$ with $|y|>2$,
$$\phi(y) - |y| = |y|^2 - |y| + c
\geq |y|^2 - |y| -2 = (|y|-2)(|y|+1)>0.$$
Just as in the previous paragraph, then,
$\{\phi_c^n(x)\}_{n\geq 0}$ is strictly increasing
if $|x|>2$, contradicting the preperiodicity hypothesis.
Similarly, if $c< -2$, then noting that
$A=(1-\sqrt{1-4c})/2<-1$ and $B=(1+\sqrt{1-4c})/2>2$
are the two (real) roots of $\phi(z)-z=0$,
we have, for any $y\in\RR$ with $|y|>B$,
$$\phi(y) - |y| = |y|^2 - |y| + c
=(|y|-A)(|y|-B)>0.$$
Meanwhile, $\phi_c^{-1}([-B,B])=[-B,-\sqrt{-c-B}]\cup
[\sqrt{-c-B},B]$, and hence for $x\in\QQ$
to be preperiodic, we must have $|x|\in [\sqrt{-c-B},B]$.
\end{proof}

We can improve the bound
of Lemma~\ref{lem:quadpoly}.(c) with a minor extra assumption,
as follows.

\begin{lemma}
\label{lem:quadpoly2}
With notation as in Lemma~\ref{lem:quadpoly},
let $\calO_c(x)$ denote the forward orbit of $x$ under $\phi_c$.
If $\#\calO_c(x) \geq 4$, then $k < -3n^2/4$.
\end{lemma}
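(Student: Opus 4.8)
The plan is to rule out $k\geq -3n^2/4$ by a real-dynamics argument, sharpening the interval analysis already carried out in Lemma~\ref{lem:quadpoly}.(d). Combined with Lemma~\ref{lem:quadpoly}.(c), which gives $k\leq n^2/4$, it suffices to treat the case $-3n^2/4\leq k\leq n^2/4$, i.e.\ $-3/4\leq c\leq 1/4$. First I would recall that for $-3/4\le c\le 1/4$ the map $\phi_c$ has a real fixed point $\alpha=(1-\sqrt{1-4c})/2\in[-1/2,1/2]$ with multiplier $\phi_c'(\alpha)=2\alpha\in[-1,1]$, so this fixed point is (non-repelling and in fact) attracting or neutral on the real line, and the interval $[-\beta,\beta]$ with $\beta=(1+\sqrt{1-4c})/2$ is forward-invariant. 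The key point is to show that inside this interval the orbit of any rational point is forced to be very short unless it lands exactly on a fixed point.

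The main step is the following claim: for $-3/4\le c\le 1/4$, every real preperiodic point $x$ of $\phi_c$ satisfies $\#\calO_c(x)\le 3$. I would prove this by analyzing the (at most two) real periodic cycles. Since $c\ge -3/4$, the only real periodic points are the fixed points $\alpha$ and $\beta$ (the period-two orbit of $\phi_c$ is real only for $c<-3/4$, as its existence requires $1+4c<0$); here $\beta$ is repelling since $\phi_c'(\beta)=2\beta=1+\sqrt{1-4c}\ge 1$, with equality only at $c=1/4$. A preperiodic orbit therefore eventually lands on $\alpha$ or on $\beta$. The preimages of $\beta$ under $\phi_c$ are $\beta$ itself and $-\beta$, and $-\beta$ has no further rational (indeed no further real) preimage when $c>-3/4$ because $\phi_c^{-1}(-\beta)$ requires solving $y^2=-\beta-c=\alpha-1<0$; so an orbit ending at $\beta$ has length at most $2$. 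For orbits ending at $\alpha$: the preimages of $\alpha$ are $\pm\sqrt{\alpha-c}=\pm\sqrt{\alpha-c}$, and one checks $\alpha-c=\alpha-(\alpha-\alpha^2)=\alpha^2$, so the preimages of $\alpha$ are $\alpha$ and $-\alpha$; continuing, the preimages of $-\alpha$ solve $y^2=-\alpha-c$, and one must check this has at most the solutions already accounted for, or else produces points whose own preimages are forced to coincide with earlier orbit members — in any case the tree of real preimages of $\alpha$ has depth at most $2$, giving $\#\calO_c(x)\le 3$. So in all cases $\#\calO_c(x)\le 3$ when $c\ge -3/4$, which is the contrapositive of the lemma.

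The hard part will be the bookkeeping in the preimage-tree argument for the fixed point $\alpha$: one needs to verify carefully that no real preimage chain of length $4$ exists, handling the boundary cases $c=1/4$ (where $\alpha=\beta=1/2$) and $c=-3/4$ (where the period-two orbit degenerates to the neutral point $-1/2$) separately, since there the multiplier estimates become equalities and the generic argument is tight. An alternative, possibly cleaner, route is purely metric: show that for $c\ge -3/4$ and $x$ in the invariant interval $[-\beta,\beta]$, either $x$ is a fixed point or $|\phi_c(x)-\alpha|\ge|x-\alpha|$ fails in a way that prevents an orbit of length $\ge 4$ from closing up — but I expect the explicit preimage description above to be the most transparent. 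Once the claim is established, the lemma follows immediately: if $\#\calO_c(x)\ge 4$ then $c<-3/4$, i.e.\ $k=cn^2<-3n^2/4$.
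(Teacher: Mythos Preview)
Your overall architecture matches the paper's: reduce to showing that for $-3/4\le c\le 1/4$ every rational preperiodic orbit ends at a fixed point, and then bound the tail length by~$2$. The paper handles the endpoints $c=1/4$ and $c=-3/4$ by direct computation and, for $c\in(-3/4,1/4)$, invokes the standard real-dynamics fact (the main cardioid) that the only real periodic points are the two fixed points. For the tail bound, the paper does \emph{not} argue dynamically; it simply cites Poonen's arithmetic classification (part~6 of Theorem~3 in \cite{Poo}), which shows that a rational orbit terminating at a rational fixed point has length at most~$3$.

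Your proposed ``main step'' --- that every \emph{real} preperiodic point has $\#\calO_c(x)\le 3$ --- is false, and this is where your plan breaks down. For $c\in(-3/4,0)$ one has $\alpha\in(-1/2,0)$, hence $-\alpha-c=\alpha^2-2\alpha=\alpha(\alpha-2)>0$, so $-\alpha$ \emph{does} have two real preimages $\pm\gamma$ with $\gamma=\sqrt{\alpha^2-2\alpha}$. Those in turn have real preimages (e.g.\ for $c=-1/2$, one finds $\gamma=3^{1/4}/\sqrt{2}$ and then $\delta=\sqrt{\gamma+1/2}\approx 1.196<\beta$, giving a real orbit $\delta\mapsto\gamma\mapsto-\alpha\mapsto\alpha$ of length~$4$). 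In fact the real backward orbit of $\alpha$ is infinite --- it is dense in the Julia set --- so no purely real-dynamical argument can cap the tail at length~$2$. The obstruction to a \emph{rational} tail of length~$3$ is genuinely arithmetic: one needs both $\alpha\in\QQ$ and $\alpha(\alpha-2)$ (and then a further iterate) to be rational squares simultaneously, and ruling this out is exactly the content of Poonen's result. Your alternative ``metric'' route has the same defect. (Minor aside: your computation $-\beta-c=\alpha-1$ drops the $-c$; the correct value is $\alpha^2-1$, though your sign conclusion survives.)
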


\begin{proof}
Given Lemma~\ref{lem:quadpoly}.(c),
we need to show that if $c\in [-3/4,1/4]$, then
the preperiodic point $x$ has $\#\calO_c(x) \leq 3$.

First of all, using Lemma~\ref{lem:quadpoly},
it is easy to see that
$$\Preper(\phi_{1/4},\QQ) = \Big\{\pm \frac{1}{2} \Big\},
\quad\text{and}\quad
\Preper(\phi_{-3/4},\QQ) = \Big\{\pm \frac{1}{2}, \pm \frac{3}{2}\Big\}.$$
In both cases, it follows immediately that
each preperiodic point
has forward orbit of length at most $2$.

Second of all, for any $c\in (-3/4,1/4)$, the dynamics of $\phi_c$
on $\RR$ has two fixed points, at $a<b\in\RR$, and every point in
$\RR$ is either attracted to $\infty$ under iteration,
or attracted to $a$ under iteration,
or equal to $\pm b$; and $\phi_c(\pm b) = b$.
This well known fact is easy to check; see, for example, 
Section~VIII.1, and especially Theorem~VIII.1.3, of \cite{CG}.
(Indeed, the $c$-interval $(-3/4,1/4)$ is precisely the
intersection of the main cardioid of the Mandelbrot set
with the real line.)  Thus, the only rational preperiodic orbits
of $\phi$ must end in fixed points.  However, in part~6
of Theorem~3 of \cite{Poo}, Poonen proved that for $x,c\in\QQ$,
if $x$ is preperiodic to a fixed point, then
$\#\calO(x)\leq 3$, as desired.
%
\end{proof}

Lemmas~\ref{lem:quadpoly} and~\ref{lem:quadpoly2}
are only about the case that the point $x\in\QQ$
is preperiodic for $\phi_c$.
The following result, using similar ideas as those in
Lemma~\ref{lem:quadpoly}, is also relevant to finding
non-preperiodic points of small canonical height.

\begin{lemma}
\label{lem:quadpoly3}
Let $x,c\in\QQ$, let $\phi_c(z) = z^2+c$,
and let $p$ be a prime number.  Set $s:=v_p(c)$,
and suppose $v_p(\phi^i(x)) < \min\{0,s/2\}$ for some $i\geq 0$.
Then
$$\hat{h}_c(x) \geq
\begin{cases}
2^{-i-1}(2-s)  \log p & \text{ if } s\leq -2 \text{ is even},
\\
2^{-1}\log 2 & \text{ if } p=2 \text{ and } s=-2,
\\
\log 2 & \text{ if } p=2 \text{ and } s=-4,
\\
2^{-1}\log p & \text{ otherwise}.
\end{cases}
$$
\end{lemma}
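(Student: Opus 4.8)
The plan is to control $\hat h_c(x)$ by watching the $p$-adic valuations of the iterates $y_m:=\phi_c^m(x)$: once such a valuation is negative enough it doubles at every step, which forces the Weil heights of the iterates, and hence $\hat h_c(x)$, to be large. Write $s:=v_p(c)$. The elementary fact driving everything is the \emph{cascade}: if $v_p(w)<\min\{0,s/2\}$ then $2v_p(w)<s=v_p(c)$, so $v_p(\phi_c(w))=v_p(w^2+c)=2v_p(w)$, which is again below $\min\{0,s/2\}$, and inductively $v_p(\phi_c^m(w))=2^m v_p(w)$ for all $m\ge 0$. Since $h(w)\ge -v_p(w)\log p$ whenever $v_p(w)<0$, we get $h(\phi_c^m(w))\ge 2^m|v_p(w)|\log p$, and hence $\hat h_c(w)=\lim_m 2^{-m}h(\phi_c^m(w))\ge|v_p(w)|\log p$. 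Combining this with the functional equation $\hat h_c(x)=2^{-j}\hat h_c(y_j)$ gives the master estimate
$$\hat h_c(x)\ \ge\ 2^{-j}\,|v_p(y_j)|\,\log p\qquad\text{for every }j\ge 0\text{ with }v_p(y_j)<\min\{0,s/2\}.$$
By hypothesis $j=i$ is allowed, and we are free to take the \emph{least} such $j$, which is at most $i$; note also that $|v_p(y_j)|\ge 1$ always, since $v_p(y_j)<\min\{0,s/2\}\le 0$.

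When $s\le -2$ is even, both $v_p(y_i)$ and $s/2$ are integers, so $v_p(y_i)\le s/2-1$, i.e.\ $|v_p(y_i)|\ge(2-s)/2$, and the master estimate with $j=i$ gives $\hat h_c(x)\ge 2^{-i-1}(2-s)\log p$. For the other cases the target bound carries no $i$, so the plan is to show that the least admissible $j$ is tiny. If $s\ge 0$, then $v_p(y)\ge 0$ forces $v_p(\phi_c(y))=v_p(y^2+c)\ge 0$, so the valuation can first fall below $0$ only at $y_0$; hence $j=0$ and $\hat h_c(x)\ge|v_p(x)|\log p\ge\log p$. If $s<0$ is odd, then $2v_p(y)$ is even while $s$ is odd, so from any $y$ with $v_p(y)\ge(s+1)/2$ (i.e.\ any $y$ not already in the attracting region) one gets $v_p(\phi_c(y))=\min\{2v_p(y),s\}=s<s/2$; thus $j\le 1$, and for $j=1$ we have $v_p(y_1)=s$, so $\hat h_c(x)\ge 2^{-1}|s|\log p\ge 2^{-1}\log p$.

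The two cases $p=2$, $s\in\{-2,-4\}$ require the sharper constants $2^{-1}\log 2$ and $\log 2$, and here the master estimate degrades once $j\ge 3$ (resp.\ $j\ge 2$), so one has to rule out orbits that loiter in the thin annulus just outside the attracting region. Writing $c=a/2^{|s|}$ with $a$ odd and using $m^2\equiv 1\pmod 8$ for odd $m$, a finite $2$-adic computation shows: from $v_2(y)\ge 0$ the next valuation is exactly $s$; the only valuation from which one can neither immediately enter the attracting region nor immediately descend is the critical value $s/2$ (namely $-1$ for $s=-2$ and $-2$ for $s=-4$); and the residue of $a$ modulo $8$ needed to keep the orbit at the critical value is incompatible with the residue needed for it to then move off toward the attracting region. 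Hence the least admissible $j$ is at most $2$, with $v_2(y_j)=-2$ when $s=-2$, and $v_2(y_j)=-4$ when $s=-4$ and $j=2$ (and $v_2(y_j)\le -3$ when $j\le 1$). The master estimate then yields $\hat h_c(x)\ge 2^{-2}\cdot 2\log 2=2^{-1}\log 2$ and $\hat h_c(x)\ge 2^{-2}\cdot 4\log 2=\log 2$ respectively, the cases $j\le 1$ giving strictly more. This last $2$-adic case analysis is the main obstacle: everywhere else the valuation cascade does the job directly, but here it must be supplemented by an explicit exclusion of short exceptional orbits.
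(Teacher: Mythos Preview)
Your proof is correct and follows essentially the same route as the paper's: both rest on the valuation cascade $v_p(\phi_c(w))=2v_p(w)$ once $v_p(w)<\min\{0,s/2\}$, and both handle the special cases $p=2$, $s\in\{-2,-4\}$ by the same residue analysis of $a\bmod 8$. Your packaging via the ``master estimate'' $\hat h_c(x)\ge 2^{-j}|v_p(y_j)|\log p$ at the \emph{least} admissible $j$ is a tidy reformulation of the paper's case-by-case treatment of $r=v_p(x)$, but the underlying computations (in particular that the orbit either stays at the critical valuation $s/2$ forever or leaves it after one step, depending only on the residue of $a$) are identical.
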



\begin{proof}
Let $r=v_p(x)$.
If $s\geq 0$, suppose first that $r\geq 0$ as well.
Then $v_p(\phi^j(x))\geq 0$ for all $j\geq 0$,
contradicting our hypotheses.  Thus, we must have
$r\leq -1$.  Applying $\phi$, it is immediate that
$v_p(\phi^j(x))=2^j r$ for all $j\geq 1$.
That is, the denominator of $\phi^j(x)$ features $p$ raised
to the power $2^j |r|$, and hence
$$\hat{h}_c(x) = \lim_{j\to\infty} 2^{-j} h(\phi^j(x)) \geq |r| \log p
\geq \log p.$$

If $s\leq -1$ is odd, then regardless
of the value of $r=v_p(x)$, we must have $v_p(\phi(x))\leq s$,
and hence $v_p(\phi^j(x))\leq 2^{j-1} s$ for all $j\geq 1$.  Thus,
$$\hat{h}_c(x) = \lim_{j\to\infty} 2^{-j} h(\phi^j(x)) \geq 2^{-1}\log p.$$

If $s\leq -2$ is even, then the hypothesis that $v_p(\phi^i(x))<s/2$
gives $v_p(\phi^i(x))\leq (s-2)/2$, and hence
$v_p(\phi^j(x))\leq 2^{j-i-1} (s-2)$ for all $j\geq i$.  Thus,
$$\hat{h}_c(x) = \lim_{j\to\infty} 2^{-j} h(\phi^j(x))
\geq 2^{-i-1} (2-s)\log p.$$

If $p=2$ and $s=-2$, write $c=a/4$, where $a\in\QQ$ with $v_2(a)=0$.
If $r\leq -2$, then $v_2(\phi^j(x))=2^j r$
for all $j\geq 0$, and hence $\hat{h}_c(x) \geq -r\log 2 \geq 2\log 2$.
If $r\geq 0$, then $v_2(\phi(x))=-2$, so that 
$\hat{h}_c(\phi(x)) \geq 2\log 2$,
and hence $\hat{h}_c(x) \geq \log 2$.
Lastly, if $r=-1$, suppose first that $a\equiv 1 \pmod{4}$.
Writing $x=b/2$ with $v_2(b)=0$, we have
$v_2(b^2+a)=1$, and therefore $v_2(\phi(x))=-1$ as well.  Thus,
$v_2(\phi^j(x))=-1$ for all $j\geq 0$, contradicting the hypotheses.
Hence, we must instead have $a\equiv 3 \pmod{4}$.
Then $v_2(b^2+1)\geq 2$, so that $v_2(\phi(x))\geq 0$,
implying that $\hat{h}_c(\phi(x))\geq \log 2$,
and therefore that $\hat{h}_c(x)\geq 2^{-1}\log 2$.

It remains to consider $p=2$ and $s=-4$.
If $r\leq -3$, then $v_2(\phi^j(x))=2^j r$
for all $j\geq 0$, and hence $\hat{h}_c(x) \geq -r\log 2 \geq 3\log 2$.
If $r\geq -1$, then $v_2(\phi(x))=-4$, so that
$\hat{h}_c(\phi(x)) \geq 4\log 2$,
and hence $\hat{h}_c(x) \geq 2\log 2$.

Last, if $p=2$, $s=-4$, and $r=-2$,
write $c=a/16$ and $x=b/4$ with $v_2(a)=v_2(b)=0$.
We consider three cases.
First,
if $a\equiv 3 \pmod{8}$, then $v_2(b^2+a)=2$, and hence
$v_2(\phi(x))=-2$; thus, $v_2(\phi^j(x))=-2$ for all $j\geq 0$,
contradicting the hypotheses.
Second,
if $a\equiv 7 \pmod{8}$, then $v_2(b^2+a)\geq 3$, and hence
$v_2(\phi(x))\geq -1$.
By the previous paragraph, then, $\hat{h}_c(\phi(x))\geq 2\log 2$,
and therefore $\hat{h}_c(x)\geq \log 2$.
Third,
if $a\equiv 1 \pmod{4}$, then $v_2(b^2+a)=1$, and hence
$v_2(\phi(x))=-3$.
Again by the previous paragraph, $\hat{h}_c(\phi(x))\geq 3\log 2$,
and therefore $\hat{h}_c(x)\geq (3/2)\log 2$.
\end{proof}

In Algorithm~\ref{alg:quadpoly} below, we will check whether
the denominator of $\phi^3(x)$ is too large --- that is, whether
it is more than the square root of the denominator of $c$.
If that happens, then some prime $p$ appears in the denominator
of $\phi^3(x)$ to too large a power.
By Lemma~\ref{lem:quadpoly3} with $i=3$, then, we would
have
$$\hat{h}_c(x)\geq \min\{2^{-4}\cdot 4, 2^{-1} \}\log p
= \frac{\log p}{4} \geq \frac{\log 3}{4} =0.275\ldots$$
if $p\geq 3$, or
$$\hat{h}_c(x)\geq \min\{2^{-4}\cdot 8, 2^{-1} \}\log 2
= \frac{\log 2}{2} =0.347\ldots.$$
if $p=2$.
Meanwhile, the parameters $c$ in the search range we used had
denominator at most $60060^2$ and absolute value $|c|\leq 10$,
so that $h(c)\leq 2\log(60060)+\log(10)$.
Thus, the associated height ratio would be at least
$$\frac{\hat{h}_c(x)}{h(c)} \geq \frac{\log 3}{4(2\log(60060) + \log 10)}
= 0.0113\ldots$$
and usually much larger.
In short, if the third iterate $\phi^3(x)$ has
the wrong denominator, the pair $(x,c)$ will not be of interest to us.

Similarly, even though Lemmas~\ref{lem:quadpoly} and~\ref{lem:quadpoly2}
do not expressly disallow points
of small but positive canonical height when $c\geq -3/4$, they strongly 
suggest that such points would be unlikely to show
up with denominators in our range of $n\leq 60060$.
We confirmed the absence of such points
for smaller denominators in some preliminary searches.
Thus, although we risked missing a very small handful of points
of small height and larger denominators,
we decided to exclude the parameters $c\geq -3/4$ from our search.

Our entire search, then, was over pairs $(x,c)$
with $x=m/n$ and $c=k/n^2$
fitting the restrictions of Lemmas~\ref{lem:quadpoly}
and~\ref{lem:quadpoly2}.  Meanwhile, since $\phi_c(-x)=\phi_c(x)$,
we may assume that the starting point $x$ is positive.
Thus, we were led to the following search algorithm,
which is guaranteed to find all previously
unknown rational preperiodic pairs
$(x,c)$ in its search range, as well as
nearly all pairs in the same range with especially small canonical
height ratio $\hat{h}_c(x)/h(c)$.

\begin{alg}
\label{alg:quadpoly}
Fix integers $n_{\max}$ and $N_{\max}$ as bounds.
[We used $n_{\max}=60060$ and $N_{\max}=10$.]

1. Let $n$ run through the integers from $1$ to $n_{\max}$.

2. For each such $n$, let $m$ run through the integers from $1$ to $n$
that are relatively prime to $n$.

3. For each such $n$ and $m$, let $k$ run through all the integers
congruent to $m^2$ modulo $n$ and lying between
$-3n^2/4$ and $-n^2 N_{\max}$.
Let $c:=k/n^2$, and $\phi_c(z):=z^2 + c$.

4. Let $x$ run through all rational numbers of the form
$x:=(m+nj)/n$ with $j\in\ZZ$ and $x$ in the interval
$[0,2]$ or $[\sqrt{-c-B},B]$ from
Lemma~\ref{lem:quadpoly}.(d).

5. Compute $\phi^i(x)$ for $i=1,2,3,4$.
Stop iterating and
discard $x$ if $\phi^i(x)$ coincides with an earlier iterate
for some $i\leq 4$,
or has denominator larger than $n$ for some $i\leq 3$.

6. Otherwise, compute
$$ 2^{-12} h\big(\phi^{12}(x)\big).$$
If this value is less than $0.02$ times $h(c)$,
record it as our approximation for the canonical height
$\hat{h}_c(x)$, and record
$\hat{h}_c(x)/h(c)$ as the associated height ratio.
\end{alg}

In \cite{Poo}, Poonen fully classified all quadratic polynomial
preperiodic orbits in $\QQ$ with $\#\calO_c(x)\leq 4$,
which is why we discarded them in Step~5 of the algorithm.
He also showed that for $x,c\in\QQ$, if $x$ is preperiodic for $\phi_c$,
then there are only two ways that
the forward orbit $\calO_c(x)$ can have more than four points.
Specifically, either $c=-29/16$ and $x=\pm 3/4$,
or else the periodic cycle of the orbit has period at least $6$;
the latter should be impossible, according to Conjecture~\ref{conj:poo}.
If such a point did exist, Algorithm~\ref{alg:quadpoly}
would simply declare it to be a point of extremely small
canonical height.  We would have checked
any such points by hand for preperiodicity, but our
search found none.


Our choice of $n_{\max}=60060=2^2\cdot 3\cdot 5\cdot 7 \cdot 11 \cdot 13$
is motivated first in order to ensure the computation finished in reasonable
time, but also because the results of \cite{MS1} show that preperiodic
orbits in $\PP^1(\QQ)$
cannot be very long if there are small primes of good reduction.
Moreover, the results of \cite{Ben9} show that there cannot be too many
rational preperiodic points unless there are a lot of bad primes.
The bad primes of $z^2+c$ are precisely those dividing the denominator
of $c$, and hence we wanted our search to run at least to $n=60060$.

In the case of $p=2$, we needed the $2$ in the denominator of $x$ to
appear to at least the power $2$, for the following reason.
Although any map $\phi_{q/4}(z)=z^2 + q/4$
with $q\in\QQ$ and $q\equiv 1\pmod{4}$ has bad reduction as written,
the conjugate $\phi_{q/4}(z+1/2)-1/2 = z^2 + z + (q-1)/4$
would have good reduction at $2$; by the
results of \cite{Zieve} (summarized in \cite{Sil}, Theorem~2.28),
the forward orbit of a preperiodic point $x\in\QQ$ could then
have length at most $4$.
(On the other hand, if $q\equiv 3\pmod{4}$, then all points
of $\QQ$ would have canonical height at least $\log 2$,
as shown in the proof of Lemma~\ref{lem:quadpoly3}.)

\begin{table}
\scalebox{0.8}{
\begin{tabular}{|c|c|c|c|c|}
\hline
$c$ & $x$ & $\hat{h}_c(x)$ & $\hat{h}_c(x)/h(c)$
&
$\phi_c(x), \phi_c^2(x),\ldots$
\\
\hline
$-\frac{181}{144}$ & $\frac{7}{12}$ & .03433 & .00660 &
$-\frac{11}{12}, -\frac{5}{12}, -\frac{13}{12},
-\frac{1}{12}, -\frac{5}{4}, \frac{11}{36}, -\frac{377}{324},\ldots
\vphantom{\sum_{X_X}^X}$
\\
\hline
$-\frac{1153}{576}$ & $\frac{11}{24}$ & .06505 & .00923 &
$-\frac{43}{24},\frac{29}{24},-\frac{13}{24},-\frac{41}{24},
\frac{11}{12},-\frac{223}{192},-\frac{8021}{12288},\ldots$
\\
\hline
$-\frac{517}{144}$ & $\frac{17}{12}$ & .06885 & .01102 &
$-\frac{19}{12},-\frac{13}{12},-\frac{29}{12},\frac{9}{4},
\frac{53}{36},-\frac{461}{324},-\frac{41093}{26244},\ldots
\vphantom{\sum_{X_X}^X}$
\\
\hline
$-\frac{36989}{19600}$ & $\frac{153}{140}$ & .12319 & .01171 &
$-\frac{97}{140},-\frac{197}{140},\frac{13}{140},-\frac{263}{140},
\frac{1609}{980},\frac{38821}{48020},\ldots
\vphantom{\sum_{X_X}^X}$
\\
\hline
$-\frac{31949}{19600}$ & $\frac{27}{140}$ & .12319 & .01187 &
$-\frac{223}{140},\frac{127}{140},-\frac{113}{140},-\frac{137}{140},
-\frac{659}{980},-\frac{56561}{48020},\ldots
\vphantom{\sum_{X_X}^X}$
\\
\hline
$-\frac{5149}{3600}$ & $\frac{23}{60}$ & .10274 & .01202 & 
$-\frac{77}{60},\frac{13}{60},-\frac{83}{60},\frac{29}{60},
-\frac{359}{300},\frac{13}{7500},-\frac{6704413}{4687500},\ldots
\vphantom{\sum_{X_X}^X}$
\\
\hline
$-\frac{205}{144}$ & $\frac{1}{12}$ & .06866 & .01290 & 
$-\frac{17}{12},\frac{7}{12},-\frac{13}{12},-\frac{1}{4},
-\frac{49}{36},\frac{139}{324},-\frac{32531}{26244},\ldots
\vphantom{\sum_{X_X}^X}$
\\
\hline
$-\frac{181}{144}$ & $\frac{11}{12}$ & .06866 & .01321 & 
$-\frac{5}{12}, -\frac{13}{12},-\frac{1}{12}, -\frac{5}{4},
\frac{11}{36}, -\frac{377}{324},-\frac{2545}{26244}\ldots
\vphantom{\sum_{X_X}^X}$
\\
\hline
$-\frac{16381}{7056}$ & $\frac{97}{84}$ & .13059 & .01346 & 
$-\frac{83}{84},-\frac{113}{84},-\frac{43}{84},-\frac{173}{84},
\frac{1129}{588},\frac{39331}{28812}\ldots
\vphantom{\sum_{X_X}^X}$
\\
\hline
$-\frac{931161001}{476985600}$ & $\frac{30379}{21840}$ & .28548 & .01382 & 
$-\frac{379}{21840},-\frac{42629}{21840},\frac{40571}{21840},
\frac{32731}{21840},\frac{27809}{94640},-\frac{76737829}{41127840},
\ldots\vphantom{\sum_{X_X}^X}$
\\
\hline
$-\frac{10381}{3600}$ & $\frac{121}{60}$ & .12758 & .01380 & 
$\frac{71}{60},-\frac{89}{60},-\frac{41}{60},-\frac{29}{12},
\frac{887}{300},\frac{43947}{7500},\frac{147354737}{4687500},\ldots
\vphantom{\sum_{X_X}^X}$
\\
\hline
$-\frac{9901}{3600}$ & $\frac{131}{60}$ & .12912 & .01403 & 
$\frac{121}{60},\frac{79}{60},-\frac{61}{60},-\frac{103}{60},
\frac{59}{300},-\frac{6779}{2500},\frac{64722611}{14062500},
\ldots
\vphantom{\sum_{X_X}^X}$
\\
\hline
$-\frac{293749}{176400}$ & $\frac{433}{420}$ & .17685 & .01405 & 
$-\frac{253}{420},-\frac{547}{420},\frac{13}{420},-\frac{233}{140},
\frac{6959}{6300},-\frac{630923}{1417500},\ldots
\vphantom{\sum_{X_X}^X}$
\\
\hline
$-\frac{271909}{176400}$ & $\frac{43}{420}$ & .17685 & .01413 & 
$-\frac{643}{420},\frac{337}{420},-\frac{377}{420},-\frac{103}{140},
-\frac{6301}{6300},-\frac{767033}{1417500},\ldots
\vphantom{\sum_{X_X}^X}$
\\
\hline
$-\frac{1513}{576}$ & $\frac{31}{24}$ & .10590 & .01446 & 
$-\frac{23}{24},-\frac{41}{24},\frac{7}{24},-\frac{61}{24},
\frac{23}{6},\frac{2317}{192},\frac{1757219}{12288},\ldots
\vphantom{\sum_{X_X}^X}$
\\
\hline
$-\frac{373}{144}$ & $\frac{23}{12}$ & .08640 & .01459 & 
$\frac{13}{12}, -\frac{17}{12}, -\frac{7}{12},
-\frac{9}{4}, \frac{89}{36},\frac{1141}{324},
\frac{257491}{26244}, \ldots\vphantom{\sum_{X_X}^X}$
\\
\hline
$-\frac{1013082841}{476985600}$ & $\frac{10541}{21840}$ & .30762 & .01483 & 
$-\frac{41299}{21840},\frac{31709}{21840},-\frac{349}{21840},
-\frac{46381}{21840},\frac{677449}{283920},\frac{5994294739}{1679386800}
\ldots\vphantom{\sum_{X_X}^X}$
\\
\hline
$-\frac{160021}{63054}$ & $\frac{181}{252}$ & .17952 & .01498 & 
$-\frac{505}{252},\frac{377}{252},-\frac{71}{252},
-\frac{205}{84},\frac{7793}{2268},\frac{1706041}{183708}
\ldots\vphantom{\sum_{X_X}^X}$
\\
\hline
\end{tabular}
}
\caption{$(x,c)\in\QQ^2$ in the search region
with canonical height ratio $\hat{h}_c(x)/h(c) < 0.015$}
\label{tab:quadpoly}
\end{table}

The results of Algorithm~\ref{alg:quadpoly}
are summarized in Table~\ref{tab:quadpoly},
which lists all pairs $(x,c)$ we found with
height ratio at most $0.015$.
For the sake of accuracy,
we manually computed the canonical heights and height ratios
in the table to many more than the 12 iterates originally
computed by the algorithm.
Note that although there continue to be
pairs with canonical height ratio about $.013$ for
$c$ up to fairly large height (e.g., two on the list with
$n=21840 = 2^4\cdot 3 \cdot 5 \cdot 7 \cdot 13$),
none comes close to the ratio of less than $.007$ attained
by the already-known pair $(7/12,-181/144)$.
Given the large size of this search
(far larger than any prior search for such pairs),
we view this as strong computational support for
Conjecture~\ref{conj:ht}, at least for quadratic
polynomials over $\QQ$.
In addition, as noted above, we found no preperiodic orbits
outside of those classified in \cite{Poo}, thus
providing further computational evidence for
Conjecture~\ref{conj:poo}, and hence for
Conjecture~\ref{conj:ubc} in the case of quadratic polynomials
over $\QQ$.

Our data suggests that just as a small prime of good reduction
limits the length of a $\QQ$-rational preperiodic orbit, such a prime
also appears to
make it difficult for the canonical height $\hat{h}_c(x)$ to be
particularly small.  Indeed, even though our search included pairs $(x,c)$
for which the denominator $n$ of $x$
was not divisible by $4$, the reader may have noticed that
almost all of the pairs in Table~\ref{tab:quadpoly} have
$n$ divisible by $12$, and usually by $60$.
Motivated by this observation,
we did another search allowing height ratios up to $0.03$ when $4\nmid n$.
Table~\ref{tab:quadpolyodd} lists all pairs $(x,c)$ from that search
with height ratio $\hat{h}_c(x)/h(c)$ less than $.025$.
There were only four, only one of which even came close to
the cutoff of $.015$ used for Table~\ref{tab:quadpoly}.
In addition, all four had $n$ divisible by $15$.
The full data from both searches
may be found at
\texttt{http://www3.amherst.edu/\textasciitilde rlbenedetto/quadpolydata/}.

\begin{table}
\scalebox{0.8}{
\begin{tabular}{|c|c|c|c|c|}
\hline
$c$ & $x$ & $\hat{h}_c(x)$ & $\hat{h}_c(x)/h(c)$
&
$\phi_c(x), \phi_c^2(x),\ldots$
\\
\hline
$-\frac{9142351}{5832225}$ & $\frac{1394}{2415}$ & .27774 & .01733 &
$-\frac{2981}{2415},-\frac{106}{2415},-\frac{3781}{2415}
\frac{2134}{2415},-\frac{43699}{55545},-\frac{139366718}{146916525},
\ldots \vphantom{\sum_{X_X}^X}$
\\
\hline
$-\frac{28236091}{12744900}$ & $\frac{5981}{3570}$ & .35856 & .02090 &
$\frac{2111}{3570},-\frac{6661}{3570},\frac{4519}{3570},
-\frac{2189}{3570},-\frac{260493}{141610},\frac{21085814279}{18048052890},
\ldots \vphantom{\sum_{X_X}^X}$
\\
\hline
$-\frac{322457899}{143280900}$ & $\frac{24347}{11970}$ & .48733 & .02487 &
$\frac{22583}{11970},\frac{15667}{11970},-\frac{919}{1710},
-\frac{164371}{83790},\frac{124637651}{78008490},
\ldots \vphantom{\sum_{X_X}^X}$
\\
\hline
$-\frac{136643866}{79655625}$ & $\frac{3854}{8925}$ & .46675 & .02492 &
$-\frac{13646}{8925},\frac{5554}{8925},-\frac{11854}{8925},
\frac{62}{1275},-\frac{9097034}{5310375},
\frac{3820075881834}{3133342515625},
\ldots \vphantom{\sum_{X_X}^X}$
\\
\hline
\end{tabular}
}
\caption{$(x,c)\in\QQ^2$ in the search region
with $v_2(x)\geq -1$
and canonical height ratio $\hat{h}_c(x)/h(c) < 0.025$}
\label{tab:quadpolyodd}
\end{table}

\section{Quadratic Rational Functions}
\label{sect:qrats}

The space $\calM_2$ of conjugacy classes of
degree-$2$ self-morphisms of $\PP^1$ is known, by
work of Milnor \cite{Mil2} and Silverman \cite{Sil2},
to be isomorphic to $\AAA^2$.
In particular, they proved the following result,
which appears as Theorem~4.56 in \cite{Sil}.

\begin{lemma}
\label{lem:phiht}
Let $\phi\in\QQ(z)$ be a rational function of degree $2$.
Then $\phi$ has three fixed points in $\PP^1(\Qbar)$, counting
multiplicity, and the multipliers of the fixed points are precisely
the roots of a cubic polynomial
$$T^3 + \sigma_1(\phi) T^2 + \sigma_2(\phi) T + (\sigma_1(\phi) - 2),$$
where $\sigma_1(\phi),\sigma_2(\phi)\in\QQ$ are certain explicit rational
functions of the coefficients of $\phi$.

Moreover, the function $\calM_2\to\AAA^2$ given by
$\phi\mapsto (\sigma_1(\phi),\sigma_2(\phi))$ is an isomorphism
of algebraic varieties defined over~$\QQ$.
\end{lemma}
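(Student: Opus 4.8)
This is the Milnor--Silverman theorem, and I would organize the argument around its three assertions. The count of fixed points is elementary: writing $\phi=f/g$ with $f,g\in\QQ[z]$ of degree at most $2$ and no common factor, the finite fixed points are the zeros of $f(z)-zg(z)$, a polynomial of degree at most $3$, and one checks that whenever its degree drops below $3$ the deficiency is accounted for exactly by a fixed point at $\infty$ of the matching multiplicity; so over $\Qbar$ there are always three fixed points counted with multiplicity. Since the multiplier $\lambda_i=\phi'(\alpha_i)$ at a fixed point $\alpha_i$ is unchanged by $\PGL(2)$-conjugation, the elementary symmetric functions $\sigma_1=\sum_i\lambda_i$ and $\sigma_2=\sum_{i<j}\lambda_i\lambda_j$ (and $\sigma_3=\lambda_1\lambda_2\lambda_3$) descend to functions on $\calM_2$; and because the $\lambda_i$ are the roots of a monic cubic whose coefficients are polynomials in the coefficients of $\phi$ after dividing by a power of $\Res(f,g)$ --- a unit on the space of degree-$2$ maps --- these symmetric functions are in fact \emph{regular} functions there, and are defined over $\QQ$.

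For the cubic relation I would invoke the rational fixed point formula (the holomorphic Lefschetz formula on $\PP^1$): when the three fixed points are simple, $\sum_i (1-\lambda_i)^{-1}=1$. Clearing denominators turns this into the polynomial identity $3-2\sigma_1+\sigma_2 = 1-\sigma_1+\sigma_2-\sigma_3$, i.e.\ $\sigma_3=\sigma_1-2$, which holds on the dense open locus of maps with simple fixed points and hence everywhere by Zariski density. Thus the multipliers are precisely the roots of $T^3-\sigma_1 T^2+\sigma_2 T-(\sigma_1-2)$, which is the displayed polynomial once a sign convention for the $\sigma_i$ is fixed; at parabolic or repeated fixed points one reads the same statement off by continuity.

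It remains to show that $\Phi=(\sigma_1,\sigma_2)\colon\calM_2\to\AAA^2$ is an isomorphism. Over the dense open subset of $\AAA^2$ where the cubic has three distinct roots, none equal to $1$, the corresponding quadratic maps have three distinct fixed points; conjugating those to $0,1,\infty$, a short explicit computation recovers the map uniquely from the ordered multiplier triple --- for instance one takes $\phi(z)=z(z-\beta)/(\gamma z+\delta)$ and solves $\lambda_0=-\beta/\delta$, $\lambda_\infty=\gamma$, together with $\beta+\delta=1-\gamma$ (the condition that the third fixed point be $1$), uniquely for $\beta,\gamma,\delta$ --- and conversely every such triple is realized. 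Since the remaining Möbius maps permuting $\{0,1,\infty\}$ permute the multipliers accordingly, $\Phi$ is bijective over this open set, hence birational. To conclude I would either cite that $\calM_2$ is an irreducible normal affine surface over $\QQ$ --- its realization as the GIT quotient of the space of degree-$2$ maps by $\PGL(2)$, of dimension $5-3=2$ --- and verify with explicit normal forms that $\Phi$ stays bijective over the complementary parabolic curve, so that Zariski's Main Theorem forces an isomorphism; or else directly identify the ring of $\PGL(2)$-conjugation invariants as $\QQ[\sigma_1,\sigma_2]$ using the same normal forms. Everything is defined over $\QQ$ because $\sigma_1$ and $\sigma_2$ are. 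The main obstacle is precisely this endgame: controlling the parabolic locus, where the ``fixed points at $0,1,\infty$'' normalization degenerates, so as to know that $\Phi$ is bijective --- and not merely birational --- and therefore an isomorphism.
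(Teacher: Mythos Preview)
Your outline is correct and follows the standard Milnor--Silverman argument, but you should know that the paper does not prove this lemma at all: it is stated as a known result, with the proof deferred entirely to the citations of Milnor \cite{Mil2} and Silverman \cite{Sil2}, and specifically to Theorem~4.56 of \cite{Sil}. So there is no ``paper's own proof'' to compare against --- you have simply reconstructed the argument that those references contain. Your sketch hits the three main ingredients (the fixed-point count, the index relation $\sum_i(1-\lambda_i)^{-1}=1$ forcing $\sigma_3=\sigma_1-2$, and the normal-form bijectivity argument), and you are right that the delicate part is the parabolic locus, where the three-fixed-points normalization breaks down; the cited references handle this by explicit case analysis or by identifying the invariant ring directly. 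Your remark about the sign convention is also apt: the displayed cubic in the paper uses coefficients $\sigma_i$ that differ in sign from the elementary symmetric functions of the $\lambda_i$, and the paper simply absorbs this into its (unstated) definition of the $\sigma_i$.
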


The precise formulas for $\sigma_1$ and $\sigma_2$ in terms
of the coefficients of $\phi$ are not important here, but
the interested reader can find them on page~189 of \cite{Sil}.
Instead, what \emph{is} important is that Lemma~\ref{lem:phiht}
allows us to define a height function on $\calM_2$, given by
\begin{equation}
\label{eq:phiht}
h(\phi) = h(\sigma_1(\phi),\sigma_2(\phi)) :=
\max\{|a|,|b|,|c|\},
\end{equation}
where $\sigma_1(\phi)=a/c$ and $\sigma_2(\phi)=b/c$,
with $a,b,c\in\ZZ$ and $\gcd(a,b,c)=1$.

However, we are interested instead in the moduli space
not of morphisms $\phi$, but rather of pairs $(x,\phi)$
consisting of a point $x$ and a morphism $\phi$.
More precisely, given a field $K$, we define
$$\calP\calM_2(K) := \{(x,\phi)\in\PP^1(K)\times K(z) : \deg\phi=2\}/\sim,$$
where the equivalence relation $\sim$ is given by
$$(x,\phi) \sim \big(\eta(x),\eta\circ\phi\circ \eta^{-1}\big)
\quad
\text{for any } \eta\in\PGL(2,K).$$
Of course, to verify that this quotient is well-behaved from the
perspective of algebraic geometry, the machinery of
geometric invariant theory is required.  However, given the
parametrization we are about to construct, and given that we
are really interested in $\calP\calM_2$ merely as a set, we can
safely sidestep these issues.

Our parametrization, based on an idea suggested by Elkies \cite{Elk},
begins with an informal dimension count.
The set $\calP\calM_2$ should be a space of dimension $3$, because
there are five dimensions of choices for $\phi$ (the six coefficients
of the rational function, minus one for multiplying both top and bottom
by an element of $K$) and one for $x$, but then the quotient by
the action of $\PGL(2,K)$ subtracts three dimensions.

Since we will be interested in both infinite orbits of small canonical
height, and in preperiodic points with long forward orbits, we will
restrict our attention to pairs $(x,\phi)$ for which the the first
six iterates $\{\phi^i(x) : 0\leq i\leq 5\}$ are all distinct.
(Incidentally, the moduli spaces corresponding to pairs $(x,\phi)$
where $x$ is periodic of period $5$ or less are all birational
over $\QQ$ to $\PP^2$, as shown in Theorem~1(1) of \cite{BC}.)
Call the subset of $\calP\calM_2$ consisting of (equivalence classes of)
such pairs $\calP\calM'_2$.  Given $(x,\phi)\in\calP\calM'_2$,
and writing $x_i:=\phi^i(x)$, then, there is a unique $\eta\in\PGL(2,K)$
such that $\eta(x_0)=\infty$, $\eta(x_1)=1$, and $\eta(x_2)=0$.
Hence, the equivalence class of $(x,\phi)$ under $\sim$ contains a unique
element whose orbit begins in the form
\begin{equation}
\label{eq:orbit6}
\infty \mapsto 1 \mapsto 0 \mapsto x_3 \mapsto x_4 \mapsto x_5.
\end{equation}
Thus, we can view $\calP\calM'_2$ 
as coinciding, as a set, with the Zariski open subset of $\AAA^3(K)$
consisting of triples $(x_3,x_4,x_5)$
for which the map $\phi$ giving 
the partial orbit~\eqref{eq:orbit6}
actually has degree $2$.
(In particular, the three coordinates must be distinct,
with none equal to $0$ or $1$, and $\phi$ must not degenerate
to a lower-degree map.)

The restrictions $\phi(\infty)=1$ and $\phi(1)=0$ dictate that the
associated map $\phi$ is of the form
\begin{equation}
\label{eq:phiform}
\phi(z) = \frac{(a_1 z + a_0)(z-1)}{a_1 z^2 + b_1 z + b_0}.
\end{equation}
Moreover, it is straightforward to check
that \eqref{eq:orbit6} stipulates
\begin{align}
\label{eq:coeffmess}
a_1 & =x_4 (x_3^2 x_4 - x_3^2 x_5 - x_3^2 + 2 x_3 x_5 - x_4 x_5),
\notag \\
a_0 &= -x_3 b_0 = x_3^2 x_4(x_4 - 1)(x_5 -x_4 + x_4 x_5 - x_3 x_5),
\\
b_1 &= x_3^2 x_4^2 x_5 - x_3^3 x_4^2
+ 2x_3^3 x_4 - x_3^2 x_4^2 + x_4^3 x_5
- x_3^2 x_4 x_5 - x_3 x_4^2 x_5
\notag\\
& \quad - x_3^3 + x_3 x_4^2 - x_4^3 + x_3^2 x_5
+ x_3^2 - x_3 x_4 + x_4^2 - x_3 x_5.
\notag
\end{align}
Of course,
the Zariski closed subset of $\AAA^3$ on which $\phi$
does not have degree $2$ is precisely the zero locus of the resultant
of the two polynomials $(a_1 z + a_0)(z-1)$ and $a_1 z^2 + b_1 z + b_0$.

\begin{defin}
\label{def:ppspace}
For any integers $m\geq 0$ and $n\geq 1$ with $m+n\geq 6$,
let $X_{m,n}$ denote the
space of triples $(x_3,x_4,x_5)\in\calP\calM'_2\subseteq\AAA^3$
for which the map $\phi$ given
by equations~\eqref{eq:phiform} and~\eqref{eq:coeffmess}
satisfies
$\phi^{m+n}(\infty)=\phi^m(\infty)$, 
but $\phi^i(\infty)\neq\phi^j(\infty)$ for any other $i\neq j$
between $0$ and $m+n$.
\end{defin}

As a set, then, $X_{m,n}$ is the moduli space of pairs $(x,\phi)$
up to coordinate change for which $\phi^m(x)$ is periodic
of minimal period $n$, but $\phi^i(x)$ is not periodic for
$0\leq i\leq m-1$.  (Presumably it is in fact birational to the
actual geometric moduli space of such pairs up to coordinate change,
but as we noted before, we will not need to know that here.)

Our preliminary searches showed an abundance of orbits
with $\phi^6(\infty)=\phi^4(\infty)$.  This observation led us
to the following result.

\begin{lemma}
\label{lem:4simp}
$X_{4,2}$ is birational to $\PP^2$.
\end{lemma}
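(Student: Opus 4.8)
The plan is to exhibit an explicit birational map between $X_{4,2}$ and $\PP^2$ by using the constraint $\phi^6(\infty)=\phi^4(\infty)$ to solve for one of the three coordinates $x_3,x_4,x_5$ in terms of the other two. Concretely, $X_{4,2}$ sits inside the Zariski-open subset of $\AAA^3$ parametrized by $(x_3,x_4,x_5)$ via equations~\eqref{eq:phiform} and~\eqref{eq:coeffmess}, so the condition defining $X_{4,2}$ is a single polynomial equation $\phi^6(\infty)=\phi^4(\infty)$ cutting out a surface $S\subseteq\AAA^3$ (together with the open conditions that $\phi^i(\infty)$ for $0\le i\le 5$ are distinct and that $\phi$ has degree~$2$, which merely remove a proper closed subset and so do not affect birationality). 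First I would compute $x_3=\phi^3(\infty)$, $x_4=\phi^4(\infty)$, $x_5=\phi^5(\infty)$, and then $\phi^6(\infty)$, as rational functions in $(x_3,x_4,x_5)$ using the coefficient formulas; setting $\phi^6(\infty)=x_4$ clears denominators to give the defining polynomial of $S$.

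The key observation to exploit is that the equation $\phi^6(\infty)=\phi^4(\infty)$, when written out, should be of low degree --- in particular linear or at worst quadratic --- in one of the variables, say $x_5$. Since $x_5=\phi(x_4)$ and $x_4=\phi(x_3)$, and $\phi^6(\infty)=\phi(x_5)$, the equation $\phi(x_5)=x_4$ says precisely that $x_5$ lies in the fiber $\phi^{-1}(x_4)$, which consists of at most two points (as $\deg\phi=2$); hence for fixed $x_3,x_4$ the equation is quadratic in $x_5$. The geometric content of ``$\phi^6=\phi^4$'' is that $x_4=\phi^4(\infty)$ is a $2$-periodic point, i.e. $\phi^2$ fixes it, which is a codimension-one condition on the pair. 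I would therefore try to solve birationally for $x_5$ (or, if that quadratic does not factor nicely over $\QQ(x_3,x_4)$, instead solve the periodicity condition $\phi^2(x_4)=x_4$ directly for $x_5$, since $\phi$ and hence $\phi^2$ depend on $x_5$ through~\eqref{eq:coeffmess}). This realizes $S$ as a curve over $\QQ(x_3,x_4)$; to get birationality with $\PP^2$ I need this curve to be rational over $\QQ(x_3,x_4)$ --- e.g. if the relevant equation is linear in $x_5$, or is a conic with a rational point (such as one coming from a degenerate configuration on $S$), we are done. The upshot would be a rational parametrization $(s,t)\mapsto(x_3,x_4,x_5)$ with $x_3,x_4$ essentially free parameters, giving the birational map $\PP^2\dashrightarrow X_{4,2}$, with inverse $(x_3,x_4,x_5)\mapsto(x_3,x_4)$.

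The main obstacle I anticipate is twofold: first, the sheer size of the polynomial $\phi^6(\infty)-\phi^4(\infty)$ after substituting~\eqref{eq:coeffmess} --- this is a rational function whose numerator is a high-degree polynomial in three variables, and extracting the structure (which variable it is low-degree in, whether the resulting conic has an obvious rational point, which common factors from the resultant locus must be removed) is a nontrivial computer-algebra task. Second, one must carefully track the open conditions: the naive solution for $x_5$ may introduce spurious components corresponding to $\phi$ dropping degree, or to $\phi^i(\infty)=\phi^j(\infty)$ for some earlier $i,j$ (so that the orbit collapses before step $4$, or the period is $1$ rather than $2$), and these must be shown to lie in a proper closed subset so that the birational equivalence survives on the genuine $X_{4,2}$. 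I would handle this by identifying the irreducible component of $S$ containing a known explicit point of $X_{4,2}$ (e.g. read off one from the authors' data or from a quick search), and verifying the parametrization is dominant onto that component by a Jacobian/generic-rank check. Once the correct component and parametrization are in hand, the verification that it is birational to $\PP^2$ is a routine degree-one-on-degree-one argument.
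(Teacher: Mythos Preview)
Your approach is correct and matches the paper's: set $\phi(x_5)=x_4$, clear denominators, strip off degenerate factors, and parametrize by $(x_3,x_4)$. The one observation that resolves your hedging about ``linear versus conic with a rational point'' is that $x_3$ is already a preimage of $x_4$ (since $\phi(x_3)=x_4$ by construction), so the quadratic in $x_5$ automatically has $x_5=x_3$ as a $\QQ(x_3,x_4)$-rational root; this root lies in the degenerate locus (it forces $\phi^5(\infty)=\phi^3(\infty)$), and the remaining factor is linear in $x_5$, giving $x_5=(x_4-x_3)/(x_3^2-2x_3+x_4)$ directly---no conic analysis or component-finding needed.
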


\begin{proof}
$X_{4,2}$ is the locus of triples for which $\phi(x_5)=x_4$.
Solving this equation using the formula for $\phi$ from
equations~\eqref{eq:phiform} and~\eqref{eq:coeffmess} gives
$$x_4(x_4 -1)(x_3-x_5)(x_4-x_5)(x_4-x_4 x_5 - x_3^2 x_5 + 2 x_3 x_5 - x_3)
=0.$$
Dividing out by the first four factors, which correspond to parameter
choices outside $\calP\calM'_2$, we get
\begin{equation}
\label{eq:x5for42}
x_5 = \frac{x_4 - x_3}{x_3^2 - 2x_3 + x_4}.
\end{equation}
Thus, $X_{4,2}$ is parametrized by $(x_3,x_4)$, with $x_5$
given by equation~\eqref{eq:x5for42}.
\end{proof}

\begin{remark}
With $x_5$ given by equation~\eqref{eq:x5for42}, it is easy
to check with computational software that 
the resultant of the numerator and denominator of
the corresponding map $\phi$ from
equations~\eqref{eq:phiform} and~\eqref{eq:coeffmess} is
\begin{equation}
\label{eq:42res}
x_3^2 x_4^2 (x_3-x_4) (x_3-1)^8(x_4-1)^2 (x_3^2 -2x_3 + x_4)
(x_3 x_4-x_3+x_4)^5
\end{equation}
It is fairly clear why most of the terms in
expression~\eqref{eq:42res} appear, when we recall that none of
$x_3,x_4,x_5$ can equal $0$, $1$, or $\infty$.
The term $x_3 x_4-x_3+x_4$, meanwhile, appears because it is
zero precisely when the degree $1$ map taking
$\infty\mapsto 1\mapsto 0\mapsto x_3$ already maps $x_3$ to $x_4$,
and hence $\phi$ degenerates.
\end{remark}

Bearing in mind everything so far in this section,
we are led to the following algorithm.

\begin{alg}
\label{alg:quadrat}
Fix a height bound $B>0$ and a threshold $r>0$ for small height ratios.
[We used $B=\log(100)$ and $r=.002$.]

1. Let $(x_3,x_4,x_5)$ run through all triples in
$\QQ^3$ with $h(x_3),h(x_4),h(x_5)\leq B$
and with $0,1,x_3,x_4,x_5$ all distinct.

2. For each triple, discard it if the resulting map $\phi$
of equations~\eqref{eq:phiform} and~\eqref{eq:coeffmess}
degenerates to degree less than $2$.
In light of Lemma~\ref{lem:4simp},
also discard the triple if $\phi(x_5)=x_4$.

3. Compute $\phi^i(\infty)=\phi^{i-5}(x_5)$
for $i=6,\ldots,10$.  For each such $i$,
stop iterating and record $(\infty,\phi)$ as preperiodic if
$\phi^i(\infty)$ coincides with an earlier iterate.

4. Let $C=\log(|R|/D)>0$
be the constant of Lemma~\ref{lem:htfunc} for $\phi$,
and let $h(\phi)\geq 0$ be the height of $\phi$ from
Lemma~\ref{lem:phiht} and equation~\eqref{eq:phiht}.

5. Discard the triple $(x_3,x_4,x_5)$ if
$$h\big( \phi^8(\infty) \big) \geq 2^8 r\cdot h(\phi) + C
\quad\text{or}\quad
h\big( \phi^{10}(\infty) \big) \geq 2^{10} r\cdot h(\phi) + C,$$
since either one forces the height ratio
$\hat{h}_{\phi}(\infty)/h(\phi)$
to be larger than the threshold $r$.

6. Otherwise, compute
$$ 2^{-15} h\big(\phi^{15}(\infty)\big).$$
If this value is less than $r\cdot h(\phi) + 2^{-15}C$,
record it as our approximation for the canonical height
$\hat{h}_{\phi}(\infty)$, and record
$\hat{h}_{\phi}(x)/h(\phi)$ as the associated height ratio.
\end{alg}

See
\texttt{http://www3.amherst.edu/\textasciitilde rlbenedetto/quadratdata/}
for the full data we found with Algorithm~\ref{alg:quadrat}.
The most noticeable feature of the data was the
huge number of points on $X_{5,2}$.
We therefore analyzed $X_{5,2}$, leading us to the following result,
which is the precise form of the Theorem stated in the introduction.

\begin{thm}
\label{thm:52ell}
$X_{5,2}$ is birational to the elliptic surface
\begin{equation}
\label{eq:52ell}
E: y^2 = 4x^3 + (4t^4 + 4t^3 + 1) x^2
- 2t^3(t+1)^2 (2t^2 + 2t + 1) x + t^6 (t+1)^4.
\end{equation}
Moreover, $E$ has positive rank over $\QQ(t)$, including the
non-torsion point $P$ given by $(x,y)= (0,t^3(t+1)^2)$.
\end{thm}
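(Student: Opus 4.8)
The plan is to follow the setup already in place. By Definition~\ref{def:ppspace}, $X_{5,2}$ is the locus of triples $(x_3,x_4,x_5)$ for which the map $\phi$ of equations~\eqref{eq:phiform} and~\eqref{eq:coeffmess} satisfies $\phi^7(\infty)=\phi^5(\infty)$ — equivalently $\phi^2(x_5)=x_5$, i.e.\ $\phi(x_5)$ lies in a $2$-cycle through $x_5$ — while the orbit $\infty\mapsto 1\mapsto 0\mapsto x_3\mapsto x_4\mapsto x_5$ stays in $\calP\calM'_2$. First I would write down the equation $\phi^2(x_5)=x_5$ explicitly, substituting the rational expression for $\phi$ from \eqref{eq:phiform}–\eqref{eq:coeffmess}; clearing denominators yields a polynomial relation $G(x_3,x_4,x_5)=0$. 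As in the proof of Lemma~\ref{lem:4simp}, this polynomial factors, and the factors corresponding to collapsed orbits (the $X_{4,2}$ locus where $\phi(x_5)=x_4$, and factors forcing two of $0,1,x_3,x_4,x_5$ to coincide or $\phi$ to degenerate) should be divided out, leaving a single irreducible component $C(x_3,x_4,x_5)=0$ that defines $X_{5,2}$ as a surface in $\AAA^3$.

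Next I would exhibit the birational map to the Weierstrass family $E$ in \eqref{eq:52ell}. The natural fibration is by $t$: I expect one of the coordinates (most plausibly $x_3$, after perhaps an initial Möbius adjustment) to serve as the parameter $t$, so that for each fixed $t$ the residual equation in the remaining two variables is a genus-one curve. To put that curve in Weierstrass form I would find a rational point on it defined over $\QQ(t)$ — the orbit structure itself supplies obvious such points (e.g.\ the degenerate limits, or the symmetry $x_5\leftrightarrow$ the other point of the $2$-cycle) — and then run the standard chord-and-tangent normalization. The coefficients $4t^4+4t^3+1$, $-2t^3(t+1)^2(2t^2+2t+1)$, $t^6(t+1)^4$ in \eqref{eq:52ell} already tell me what the answer must be, so in practice this step is: produce explicit rational functions $x_3=x_3(t)$, $x_4=x_4(t,x,y)$, $x_5=x_5(t,x,y)$ and check with computer algebra that they transform $C=0$ into $E$ and are invertible. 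The claim that $P=(0,\,t^3(t+1)^2)$ lies on $E$ is an immediate substitution: $0 = 0 + 0 - 0 + t^6(t+1)^4$ on the right matches $y^2 = t^6(t+1)^4$ on the left.

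For the rank statement I would show $P$ is non-torsion over $\QQ(t)$. The cleanest route is specialization: pick a value $t=t_0\in\QQ$ of good reduction for which the specialized point $P(t_0)$ on the elliptic curve $E_{t_0}/\QQ$ has infinite order — checked, say, by verifying that some small multiple $nP(t_0)$ has strictly growing height, or by a $2$-descent / torsion computation showing $E_{t_0}(\QQ)_{\mathrm{tors}}$ is small and $P(t_0)\notin$ torsion. By the specialization theorem (Silverman), the specialization homomorphism $E(\QQ(t))\to E_{t_0}(\QQ)$ is injective outside finitely many $t_0$, so infinite order downstairs forces infinite order for $P$ upstairs; hence $E$ has positive rank over $\QQ(t)$. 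Finally, positive rank plus the fact that $E\to\PP^1_t$ has a section (e.g.\ the zero section, or $P$ itself) gives infinitely many $\QQ(t)$-rational — and on specializing, infinitely many $\QQ$-rational — points, which by the birational equivalence transfer to $X_{5,2}$; one only needs to note that all but finitely many of these avoid the exceptional locus of the birational map and the degenerate strata removed above, so they genuinely correspond to pairs $(x,\phi)$ with the claimed $(5,2)$ preperiodic portrait.

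The main obstacle is the first step: carrying out the factorization of $\phi^2(x_5)=x_5$ and correctly identifying which factors are ``spurious'' (coming from $X_{4,2}$, from coincidences among $0,1,x_3,x_4,x_5$, or from degeneration of $\phi$) so as to isolate the genuine $X_{5,2}$ component — this is where the algebra is heaviest and where an error would propagate. Once the defining equation is correctly pinned down, exhibiting the birational map to \eqref{eq:52ell} and verifying it is a (somewhat tedious but) routine computer-algebra check, and the non-torsion/positive-rank argument via specialization is standard.
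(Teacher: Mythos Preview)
Your outline is correct in principle, and the non-torsion argument by specialization matches the paper exactly (the paper specializes at $t=1$, identifies the fiber as curve 142a1 in Cremona's tables with trivial torsion and rank~$1$, and checks that $P$ specializes to the generator). The real difference is in how the defining equation of $X_{5,2}$ is obtained. You propose to expand $\phi^2(x_5)=x_5$ with the coefficients of~\eqref{eq:coeffmess} substituted in, then strip off the spurious factors; this works but is algebraically heavy, since one iteration of $\phi$ already produces large polynomials in $(x_3,x_4,x_5)$. The paper instead exploits the forgetful morphism $X_{5,2}\to X_{4,2}$, $(x,\phi)\mapsto(\phi(x),\phi)$: starting from the $(x_3,x_4)$-parametrization of $X_{4,2}$ from Lemma~\ref{lem:4simp} (with associated map $\psi$ having orbit $\infty,1,0,x_3,x_4,x_5,x_4$), a point of $X_{5,2}$ is recovered by adjoining a preimage $w$ of $\infty$ under $\psi$. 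Since $\deg\psi=2$, this gives an equation $g(x_3,x_4,w)=0$ that is only quadratic in $w$ and in $x_4$, cubic in $x_3$, with no spurious factors to remove. After blowing up the singular line $x_3=w=1$ via $(x_3-1)s=w-1$ one gets a plane cubic in $(x_3,x_4)$ over $\QQ(s)$, and the substitution $s=t+1$ gives~\eqref{eq:52ell}. In particular the fibration parameter is $t=(w-x_3)/(x_3-1)$, not $x_3$ as you guessed. The payoff of the paper's route is a short explicit equation from the outset; your route reaches the same surface but only after a substantial factorization that the paper's trick sidesteps entirely.
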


\begin{proof}
There is a morphism $X_{5,2}\to X_{4,2}$ taking a pair
$(x,\phi)$ to the pair $(\phi(x),\phi)$.  Parametrizing
$X_{4,2}$ by $(x_3,x_4)$ via equation~\eqref{eq:x5for42}
as in the proof of Lemma~\ref{lem:4simp}, and letting $\psi$
denote the morphism giving the orbit
$\infty,1,0,x_3,x_4,x_5,x_4$,
we can parametrize $X_{5,2}$ by triples $(x_3,x_4,w)$ for which
$\psi(w)=\infty$.
That is, $X_{5,2}$ is the subvariety of
$\AAA^3$ defined by $g(x_3,x_4,w)=0$, where
\begin{multline*}
g(x_3,x_4,w)
= w(1-x_4)x_3^3
+ (1-w) x_3 x_4^2
\\
+ w(x_4 w + x_4 - 2)x_3^2
- (2w^2 - 2w + 1)x_3 x_4 + w x_3 + (w^2-w)x_4^2
\end{multline*}
is the denominator of $\psi(w)$.

This variety is singular along the line $x_3=w=1$.  Blowing up along this
line via $(x_3-1)s = w-1$ gives the surface
$$s(s-1)x_3^2 x_4 + s x_3^2
-(2s^2-2s+1)x_3 x_4 - (s-1)x_3 + s(s-1)x_4^2=0,$$
which is cubic in $(x_3,x_4)$.
Standard manipulations, along with the substitution $s=t+1$,
give equation~\eqref{eq:52ell}, with the birationality given by
\begin{align*}
t &= \frac{w-x_3}{x_3-1},
\qquad
x = \frac{(w-x_3)^2(w-1)}{x_3(x_3-1)^3},
\\
y &= \frac{(w-x_3)^2(w-1)}{x_3^2(x_3-1)^5}\big[
(w-x_3)(w-1)(2x_4 + x_3^2) - x_3\big( (w-x_3)^2 + (w-1)^2 \big) \big]
\end{align*}
and
\begin{align*}
w &= \frac{t}{x}\big(t(t+1)^2 - x\big),
\qquad
x_3 = \frac{t^2(t+1)}{x},
\\
x_4 &= \frac{t}{2x^2}\big(y + (2t^2 + 2t+1)x - t^3(t+1)^2\big).
\end{align*}

%
%
%
%

The point $P$ in the statement of the
Theorem obviously lies on the $\QQ(t)$-curve $E$ of equation~\eqref{eq:52ell}.
Finally, specializing $E$ at $t=1$ gives an isomorphic copy
of curve 142a1 in Cremona's tables, which has trivial torsion
and rank~1 over $\QQ$.  The point $P$ specializes to
the generator, and hence $P$ must have infinite order in $E(\QQ(t))$.
\end{proof}

Obviously the point
$P$ itself lies in the degeneracy locus of the birationality
of Theorem~\ref{thm:52ell}, given the formula $x_3=t^2(t+1)/x$.
Meanwhile, it is easy to compute the first few multiples of $P$ on $E$:
\begin{align*}
P &= \big(0,t^3(t+1)^2\big),
&
[2]P &= \big(t(t+1)^2,t(t+1)^4\big),
\\
[3]P &= \big(-t^2(t+1)^2,t^2(t+1)^3\big),
&
[4]P &= \big(t^2(t+1), -t^2(t+1)(t^2+t-1)\big),
\end{align*}
\begin{align*}
[5]P &= \big(-t(t+1),-t(t+1)(t^3+3t^2+2t-1)\big),
\\
[6]P &= \big(t^3(t+1)^2(t+2),-t^3(t+1)^3(2t^3 + 6t^2 + 4t - 1)\big),
\end{align*}
and it turns out that all of the points $[n]P$,
for $n\in\{0,\pm 1,\pm 2, \pm 3, \pm 4, \pm 5\}$,
also lie in the degeneracy locus.  However, all but
finitely many $[n]P$ must lie off it, and in particular,
both $[6]P$ and $[-6]P$ are nondegenerate.  $[6]P$,
for example, corresponds to
$$x_3 = \frac{1}{t(t+1)(t+2)},
\quad
x_4 = - \frac{(t^2 + t-1)}{t^2(t+2)^2},
\quad
x_5 = \frac{t+1}{t+2},$$
with pole
$$w= - \frac{(t+1)(t^2+t-1)}{t(t+2)}.$$

Meanwhile,
given the results of \cite{BC} on the infinitude of conjugacy classes
admitting a rational $6$-periodic point, there are also
certainly infinitely many
rational functions with a length $7$ preperiodic orbit.
Indeed, given any non-critical point $x$ in the $6$-cycle (which will
be most, and usually all, of them), the other preimage $y$ of $\phi(x)$ is
necessarily rational and strictly preperiodic with orbit length $7$.
Thus, $(y,\phi)$ gives a point in $X_{1,6}(\QQ)$.

We also found numerous rational points
(though not nearly as many as for $X_{5,2}$)
on the other moduli surfaces corresponding
to strictly preperiodic orbits of length~7:
$X_{6,1}$, $X_{4,3}$, $X_{3,4}$, and $X_{2,5}$
It would be interesting to find descriptions
of these other moduli surfaces, some of which
probably have infinitely many rational points.
By contrast, the only rational points we found on $X_{0,7}$,
the moduli space of maps with $7$-cycles,
corresponded to conjugates of the map $\phi$ of
equation~\eqref{eq:7per}.

As for preperiodic orbits of length~8, certainly $X_{1,7}(\QQ)$
is nonempty.  After all, each of the seven strictly preperiodic
points of the map $\phi$ of equation~\eqref{eq:7per} has forward
orbit of length~8.
The resulting elements of $X_{1,7}$ did not show up directly
in our search, however, because the coordinate changes required
(to move any such preperiodic point $x$ to $\infty$, $\phi(x)$ to $1$,
and $\phi^2(x)$ to $0$) result in triples $(x_3,x_4,x_5)$ outside
our search region.

We found several other triples $(x_3,x_4,x_5)$ corresponding to
rational forward orbits of length $8$; they are listed in
Table~\ref{tab:quadratlen8}.  We found 26 points
in $X_{6,2}$, two in $X_{5,3}$, and none in any other surface
$X_{m,n}$ with $m+n=8$.  (It should be expected that $2$-cycles
are the easiest to realize, as a degree-two map has three fixed
points, six $3$-periodic points, and many more of any higher
period, but only two $2$-periodic points.  Thus, the $2$-periodic
points are roots of a quadratic polynomial, while all the
other periodic points are roots of higher-degree polynomials.)

Note, however, that for $m\geq 1$,
points in the moduli space $X_{m,n}$ generally come in sets of two.
After all, if $(x,\phi)$ is
a point in the moduli space, then unless $x$ is a critical point,
$\phi(x)$ has two rational preimages: $x$, and some other point $y$.
Thus, $(y,\phi)$ is also a point in the same moduli space.
The two points we found in $X_{5,3}$ were related to each other
in this way, as were 11 sets of two points we found in $X_{6,2}$.
In Table~\ref{tab:quadratlen8}, therefore,
we have listed only one point in $X_{5,3}$,
and fifteen in $X_{6,2}$.  The four unpaired points we found
in $X_{6,2}$ appear last in the table; the points they
would be paired with lie outside our search region.

\begin{table}
\scalebox{0.8}{
\begin{tabular}{|c|c|c|c|c|}
\hline
$\phi$ & orbit & tail & period & total length
\\
\hline
$\frac{330 z^2 - 187z - 143}{330 z^2 + 1217z + 429} \vphantom{\sum_{X_X}^X}$
&
$\infty,1,0,
-\frac{1}{3},
-\frac{11}{15},
-\frac{3}{5},
-\frac{55}{114},
-\frac{13}{44},
-\frac{3}{5}$
& 5 & 3 & 8
\\
\hline
$\frac{21z^2 - 84z + 63}{21z^2 - 16z - 21}
\vphantom{\sum_{X_X}^X}$
&
$\infty,1,0,
-3,
\frac{7}{3},
-\frac{1}{3},
-7,
-\frac{3}{2},
-7$
& 6 & 2 & 8
\\
\hline
$\frac{52z^2 - 30z - 22}{52z^2 + 245z + 88}
\vphantom{\sum_{X_X}^X}$
&
$\infty,1,0,
-\frac{1}{4},
-\frac{3}{8},
-1,
-\frac{4}{7},
-\frac{9}{26},
-\frac{4}{7}$
& 6 & 2 & 8
\\
\hline
$\frac{120z^2 - 98z - 22}{120z^2 + 749z + 132}
\vphantom{\sum_{X_X}^X}$
&
$\infty,1,0,-\frac{1}{6},-\frac{2}{9},-\frac{1}{5},
-\frac{12}{65},-\frac{1}{12},-\frac{12}{65}$
& 6 & 2 & 8
\\
\hline
$\frac{30z^2 - 10z - 20}{30z^2 + 7z - 30}
\vphantom{\sum_{X_X}^X}$
&
$\infty,1,0,\frac{2}{3},\frac{10}{9},\frac{2}{5},
\frac{6}{7},\frac{10}{3},\frac{6}{7}$
& 6 & 2 & 8
\\
\hline
$\frac{33z^2 - 429z + 396}{33z^2 - 197z + 132}
\vphantom{\sum_{X_X}^X}$
&
$\infty,1,0,
3,\frac{11}{3},5,33,
\frac{3}{4},33$
& 6 & 2 & 8
\\
\hline
$\frac{176z^2 + 1397z - 1573}{176z^2 + 500z - 1144}
\vphantom{\sum_{X_X}^X}$
&
$\infty,1,0,
\frac{11}{8},
-\frac{11}{2},
-\frac{11}{4},
\frac{55}{16},
2,
\frac{55}{16}$
& 6 & 2 & 8
\\
\hline
$\frac{1350z^2 - 837z - 513}{1350z^2 + 5585z + 1710}
\vphantom{\sum_{X_X}^X}$
&
$\infty,1,0,-\frac{3}{10},-\frac{9}{10},-\frac{3}{5},
-\frac{72}{175},-\frac{1}{6},-\frac{72}{175}$
& 6 & 2 & 8
\\
\hline
$\frac{700z^2 - 95z - 605}{700z^2 + 1336z + 880}
\vphantom{\sum_{X_X}^X}$
&
$\infty,1,0,-\frac{11}{16},-\frac{5}{7},
-\frac{7}{11},-\frac{5}{6},-\frac{11}{70},-\frac{5}{6}$
& 6 & 2 & 8
\\
\hline
$\frac{784z^2 - 416z - 368}{784z^2 + 3885z + 644}
\vphantom{\sum_{X_X}^X}$
&
$\infty,1,0,
-\frac{4}{7},
-\frac{2}{21},
-\frac{8}{7},
-\frac{20}{49},
\frac{1}{12},
-\frac{20}{49}$
& 6 & 2 & 8
\\
\hline
$\frac{1428z^2 - 1668z + 240}{1428z^2 - 1723z + 900}
\vphantom{\sum_{X_X}^X}$
&
$\infty,1,0,\frac{4}{15},-\frac{4}{21},\frac{10}{21},
-\frac{4}{7},\frac{12}{17},-\frac{4}{7}$
& 6 & 2 & 8
\\
\hline
$\frac{308z^2 + 19292z - 19600}{308z^2 + 1937z + 7700}
\vphantom{\sum_{X_X}^X}$
&
$\infty,1,0,-\frac{28}{11},-14,-\frac{28}{5},
-\frac{308}{17},-\frac{40}{11},-\frac{308}{17}$
& 6 & 2 & 8
\\
\hline
$\frac{9009z^2 - 17094z + 8085}{9009z^2 - 18454z - 10395}
\vphantom{\sum_{X_X}^X}$
&
$\infty,1,0,-\frac{7}{9},\frac{77}{27},\frac{35}{11},
\frac{63}{31},-\frac{77}{78},\frac{63}{31}$
& 6 & 2 & 8
\\
\hline
$\frac{5712z^2 - 5937z + 225}{5712z^2 - 137612z + 5400}
\vphantom{\sum_{X_X}^X}$
&
$\infty,1,0,\frac{1}{24},\frac{1}{26},\frac{3}{68},
\frac{117}{2992},\frac{2}{35},\frac{117}{2992}$
& 6 & 2 & 8
\\
\hline
$\frac{51480z^2 + 910z - 52390}{51480z^2 + 275477z - 120900}
\vphantom{\sum_{X_X}^X}$
&
$\infty,1,0,\frac{13}{30},-\frac{26}{5},-\frac{91}{11},
\frac{780}{253},\frac{13}{36},\frac{780}{253}$
& 6 & 2 & 8
\\
\hline
$\frac{24255z^2 - 277830z + 253575}{24255z^2 + 314788z + 65205}
\vphantom{\sum_{X_X}^X}$
&
$\infty,1,0,\frac{35}{9},-\frac{5}{18},-\frac{49}{3},
\frac{105}{13},-\frac{15}{154},\frac{105}{13}$
& 6 & 2 & 8
\\
\hline
\end{tabular}
}
\caption{$(\infty,\phi)\in\calP\calM'_2(\QQ)$ in the search region
with preperiodic orbit of length (at least) 8}
\label{tab:quadratlen8}
\end{table}

Accounting for both preimages of each point in the strict
forward orbit of $\infty$, 
each of the maps listed in Table~\ref{tab:quadratlen8}
comes with $14$ rational preperiodic points, assuming none
of the points involved are critical images.  And a simple
but tedious computation shows that this assumption is accurate.
(In fact, none of the maps in Table~\ref{tab:quadratlen8}
has any $\QQ$-rational critical points at all.)
Moreover, a longer computation --- using
Hutz's algorithm \cite{Hutz} for computing
$\Preper(\phi,\QQ)$  ---
shows that in each case,
$\Preper(\phi,\QQ)$ is precisely this set of 14 points.
The foregoing evidence from our data led us to propose
parts~(a)--(c) of Conjecture~\ref{conj:qrat}.

The abundance of points in $X_{6,2}$ suggests that
$X_{6,2}$ may be infinite, although we have not yet found a
proof or disproof of this statement.
Meanwhile, the surfaces $X_{m,n}$ for other $m,n$ with $m+n=8$
all appear to have at most finitely many rational points.
However, here the limitations of our search region, even though
it is quite large, should inspire some caution.  In particular,
as noted above, we know that $X_{1,7}$ must have some $\QQ$-rational
points, but they all lie outside our region.  That is, all such points
$(x_3,x_4,x_5)$ have $\max\{h(x_3),h(x_4),h(x_5)\} > \log 100$.


We close with Table~\ref{tab:quadratht}, which gives the pairs
$(x,\phi)$ in our search region with the smallest positive height
ratios $\hat{h}_{\phi}(x)/h(\phi)$ that we found --- specifically,
with ratio less than $.0012$.
(Recall that $h(\phi)$ is the height of $\phi$ itself as given by
Lemma~\ref{lem:phiht} and equation~\eqref{eq:phiht}.
As in Tables~\ref{tab:quadpoly} and~\ref{tab:quadpolyodd},
we manually computed the canonical heights and height ratios
in the table to many more than the 15 iterates originally
computed by the algorithm.)
In addition, as with the preperiodic points
in Table~\ref{tab:quadratlen8}, pairs $(x,\phi)$
with small height ratio come in sets of two;
Table~\ref{tab:quadratht} only lists one of the two pairs
in this situation.

Incidentally,
the fifth map listed in Table~\ref{tab:quadratht}
is simply a coordinate change of the first!
More precisely, calling the first pair $(\infty,\psi)$, the fifth
is the equivalence class of $(\psi(\infty),\psi)$, and
the canonical height ratio of the latter pair is exactly
double that of $(\infty,\psi)$.
This pair $(\infty,\psi)$ provides further
evidence for Conjecture~\ref{conj:ht}.  First discovered
by Elkies in a similar but smaller-scale search
\cite{Elk}, it showed up very
early in our search, because the point
$(\infty,\psi)\in\calP\calM'_2$ is represented
by the small-height triple $(x_3,x_4,x_5)=(-1/3, -1/5, -3/5)$.
It is telling that the second-place candidate
had height ratio one and a half times as large, even though our
search extended to points in $\calP\calM'_2$ of far
larger height.  This observation led us to pose part~(d)
of Conjecture~\ref{conj:qrat}.  After all, if there were pairs
$(x,\phi)\in\calP\calM'_2$ with even smaller positive
canonical height ratio, presumably at least \emph{one}
of them would have shown up.

\begin{table}
\scalebox{0.8}{
\begin{tabular}{|c|c|c|c|}
\hline
$\phi$ & $\hat{h}_{\phi}(\infty)$ & $\hat{h}_{\phi}(\infty)/h(\phi)$
&
$\phi^3(x), \phi^4(x),\ldots$
\\
\hline
$\frac{10z^2 - 7z - 3}{10z^2 + 37z + 9}
\vphantom{\sum_{X_X}^X}$
&
.00360 & .000466
&
$-\frac{1}{3}, -\frac{1}{5}, -\frac{3}{5}, -\frac{1}{2},
-\frac{3}{7}, -\frac{15}{41}, -\frac{9}{32}, -\frac{41}{105},
\ldots \vphantom{\sum_{X_X}^X}$
\\
\hline
$\frac{48165z^2 - 54663z + 6498}{48165z^2 - 49361z + 1482}
\vphantom{\sum_{X_X}^X}$
&
.01425 & .000747
&
$\frac{57}{13}, \frac{38}{39}, \frac{76}{65}, \frac{57}{65},
\frac{12}{13},\frac{2109}{2197}, \frac{12084}{11525},
\ldots \vphantom{\sum_{X_X}^X}$
\\
\hline
$\frac{91z^2 + 399z - 490}{91z^2 - 16z - 350}
\vphantom{\sum_{X_X}^X}$
&
.01221 & .000867
&
$\frac{7}{5}, -\frac{14}{11}, \frac{14}{3}, \frac{28}{13},
21,\frac{28}{23}, -\frac{329}{591}, \frac{975016}{446071},
\ldots \vphantom{\sum_{X_X}^X}$
\\
\hline
$\frac{1701z^2 - 427z - 1274}{1701z^2 - 3222z + 546}
\vphantom{\sum_{X_X}^X}$
&
.01553 & .000919
&
$-\frac{7}{3}, \frac{14}{27}, \frac{14}{9}, -\frac{56}{9},
\frac{7}{9}, \frac{106}{171}, \frac{4424}{3987},
-\frac{6245167}{16848423},
\ldots \vphantom{\sum_{X_X}^X}$
\\
\hline
$\frac{7z^2 - 6z - 1}{7z^2 + 20z - 3}
\vphantom{\sum_{X_X}^X}$
&
.00721 & .000931
&
$\frac{1}{3}, -\frac{1}{2}, -\frac{1}{3}, -\frac{1}{5}, -\frac{1}{14},
\frac{5}{41}, \frac{57}{16}, \frac{3403}{8043},
\ldots \vphantom{\sum_{X_X}^X}$
\\
\hline
$\frac{60z^2 - 24z - 36}{60z^2 - 143z + 6}
\vphantom{\sum_{X_X}^X}$
&
.01128 & .000935
&
$-6, \frac{3}{4}, \frac{3}{10}, \frac{6}{5}, -\frac{3}{11},
-\frac{48}{95}, -\frac{78}{853}, -\frac{96159}{56528},
\ldots \vphantom{\sum_{X_X}^X}$
\\
\hline
$\frac{42z^2 - 67z + 25}{42z^2 - 75z + 30}
\vphantom{\sum_{X_X}^X}$
&
.00829 & .000958
&
$\frac{5}{6}, \frac{1}{2}, \frac{2}{3}, \frac{3}{4},
\frac{13}{21}, \frac{8}{7}, -\frac{23}{6}, \frac{2697}{2804},
\ldots \vphantom{\sum_{X_X}^X}$
\\
\hline
$\frac{845z^2 - 20z - 825}{845z^2 + 3302z + 2145}
\vphantom{\sum_{X_X}^X}$
&
.01243 & .001028
&
$-\frac{5}{13}, -\frac{9}{13}, -\frac{20}{13}, -\frac{285}{221},
-\frac{345}{403}, \frac{510}{169}, \frac{1395}{4057},
-\frac{829730}{3831763},
\ldots \vphantom{\sum_{X_X}^X}$
\\
\hline
$\frac{8190z^2 - 10983z + 2793}{8190z^2 - 23941z + 7182}
\vphantom{\sum_{X_X}^X}$
&
.02187 & .001040
&
$\frac{7}{18}, \frac{7}{26}, \frac{21}{65}, \frac{28}{85},
\frac{9}{26}, \frac{119}{537}, \frac{50022}{149725},
\ldots \vphantom{\sum_{X_X}^X}$
\\
\hline
$\frac{1625z^2 - 1417z - 208}{1625z^2 + 11075z + 260}
\vphantom{\sum_{X_X}^X}$
&
.01780 & .001101
&
$-\frac{4}{5}, -\frac{13}{50}, -\frac{7}{65}, \frac{1}{25},
-\frac{13}{35}, -\frac{71}{475}, -\frac{1183}{40085},
\frac{113139661}{44931025},
\ldots \vphantom{\sum_{X_X}^X}$
\\
\hline
$\frac{14z^2 - 9z - 5}{14z^2 + 76z + 20}
\vphantom{\sum_{X_X}^X}$
&
.01192 & .001125
&
$-\frac{1}{4}, -1, -\frac{3}{7}, -\frac{1}{7}, -\frac{4}{11},
-\frac{3}{140}, -\frac{611}{2339}, -\frac{308865}{201037},
\ldots \vphantom{\sum_{X_X}^X}$
\\
\hline
$\frac{945z^2 - 980z + 35}{945z^2 - 3372z + 315}
\vphantom{\sum_{X_X}^X}$
&
.02104 & .001145
&
$\frac{1}{9}, \frac{35}{27}, -\frac{1}{7}, \frac{5}{21}, \frac{1}{3},
\frac{35}{132}, \frac{26287}{85071}, \frac{86934355}{311687886},
\ldots \vphantom{\sum_{X_X}^X}$
\\
\hline
$\frac{90z^2 - 20826z + 20736}{90z^2 - 533z + 1080}
\vphantom{\sum_{X_X}^X}$
&
.02642 & .001161
&
$\frac{96}{5},-\frac{72}{5}, \frac{99}{8}, -27, 8,
-\frac{1251}{23}, \frac{131004}{27343},
-\frac{532888651446}{4100060723},
\ldots \vphantom{\sum_{X_X}^X}$
\\
\hline
$\frac{1375z^2 - 847z - 528}{1375z^2 + 4025z + 1320}
\vphantom{\sum_{X_X}^X}$
&
.01778 & .001183
&
$-\frac{2}{5}, -\frac{11}{25}, -\frac{3}{5}, -\frac{99}{125}, -1,
-\frac{121}{95}, -\frac{19017}{10775}, -\frac{36280937}{10371365},
\ldots \vphantom{\sum_{X_X}^X}$
\\
\hline
$\frac{1400z^2 - 2005z + 605}{1400z^2 - 3148z + 1100}
\vphantom{\sum_{X_X}^X}$
&
.01621 & .001186
&
$\frac{11}{20}, \frac{5}{14}, \frac{7}{16}, \frac{5}{11}, \frac{23}{56},
\frac{110}{269}, \frac{2809}{6844}, \frac{23315575}{56982922},
\ldots \vphantom{\sum_{X_X}^X}$
\\
\hline
$\frac{290z^2 - 966z + 676}{290z^2 - 1077z + 754}
\vphantom{\sum_{X_X}^X}$
&
.02290 & .001198
&
$\frac{26}{29}, 2, \frac{2}{5}, \frac{10}{11}, \frac{18}{7},
-\frac{122}{109}, \frac{190862}{209065}, \frac{26084280098}{9085163135},
\ldots \vphantom{\sum_{X_X}^X}$
\\
\hline
\end{tabular}
}
\caption{$(\infty,\phi)\in\calP\calM'_2(\QQ)$ in the search region
with positive canonical height ratio less than $.0012$.}
\label{tab:quadratht}
\end{table}

\begin{remark}
\label{rem:arbitrary}
The decision to check the height
of $\phi^i(\infty)$ for iterates $i=8,10,15$
in Algorithm~\ref{alg:quadrat} was somewhat
arbitrary, but the goal was to minimize
the algorithm's run time.  In practice,
most points tested tend to blow up in height rapidly
under iteration, and therefore we did a preliminary
check only a few iterations after $x_5$ and another
two iterations later, to avoid computing
ten iterates of $x_5$ for \emph{every} candidate.
On the other hand, the computation time required
to test \emph{all} the iterates
$i=6,7,\ldots$ for each candidate also slowed
the program down.  Thus, we ultimately settled
on testing only at $i=8,10,15$ by trial and error
on relatively small search regions.
Similarly, the threshold of $.002$ as an upper
bound for height ratios of interest was chosen
by trial and error --- small preliminary searches
suggested there were plenty of interesting points
with ratio below $.002$, but simply too many larger
than that.
\end{remark}


{\bf Acknowledgements.}
Most of the results of this paper came from an undergraduate
research project at Amherst College in Summer 2011.
Supplemental computations
were implemented by author T.H.\ in Spring~2012.
Authors R.B., T.H., and C.W.\ gratefully acknowledge the support of
NSF grant DMS-0600878 for this research.
Authors R.C.\ and Y.K.\ gratefully acknowledge the support of
Amherst College's Dean of Faculty student support funds.
The authors thank David Cox, Ben Hutz, and Masato Kuwata
for helpful discussions.
The authors also thank Amherst College,
especially Andy Anderson and Steffen Plotner,
for the use of and assistance with
the College's high-performance
computing cluster, on which we ran our computations.
Finally, we thank Ben Hutz, Joseph Silverman,
and the referee for their helpful comments
in improving the original draft of this paper.

\end{document}